\newcommand{\lvt}{\left|\kern-1.35pt\left|\kern-1.3pt\left|}
\newcommand{\rvt}{\right|\kern-1.3pt\right|\kern-1.35pt\right|}
\newtheorem{thm}{Theorem}[section]
\newtheorem{cor}[thm]{Corollary}
\newtheorem{prop}[thm]{Proposition}
\theoremstyle{remark}
 \def\a{{\alpha}}
 \def\b{{\beta}}
 \def\k{{\kappa}}
 \def\t{{\theta}}
 \def\l{{\lambda}}
 \def\d{{\delta}}
 \def\s{\sigma}
 \def\la{{\langle}}
 \def\ra{{\rangle}}
 \def\CH{{\mathcal H}}
 \def\CT{{\mathcal T}}
 \def\CV{{\mathcal V}}
 \def\BB{{\mathbb B}}
 \def\RR{{\mathbb R}}
  \def\SS{{\mathbb S}}
 \def\lb{{\boldsymbol{\large {\l}}}} 
 \def\one{{\mathbf 1}}
 \def\proj{\operatorname{proj}}
\newcommand{\wt}{\widetilde}
\newcommand{\wh}{\widehat}
\def\f{\frac}
\begin{document}

\title {An integral identity with applications in orthogonal polynomials}

\author{Yuan Xu}
\address{Department of Mathematics\\ University of Oregon\\
    Eugene, Oregon 97403-1222.}\email{yuan@uoregon.edu}

\date{\today}
\thanks{The work was supported in part by NSF Grant DMS-1106113}
\keywords{Gegenbauer polynomials, orthogonal polynomials, several variables, reproducing kernel}
\subjclass[2000]{33C45, 33C50, 42C10}

\begin{abstract}
For $\boldsymbol{\large {\lambda}} = (\lambda_1,\ldots,\lambda_d)$ with $\lambda_i > 0$, it is proved that 
\begin{equation*}
    \prod_{i=1}^d  \frac{ 1}{(1-  r x_i)^{\lambda_i}} =  \frac{\Gamma(|\boldsymbol{\large {\lambda}}|)}{\prod_{i=1}^{d} \Gamma(\lambda_i)} \int_{\CT^d} \frac{1}{ (1- r \la x, u \ra)^{|\boldsymbol{\large {\lambda}}|}} \prod_{i=1}^d u_i^{\lambda_i-1} du,
\end{equation*}
where $\CT^d$ is the simplex in homogeneous coordinates of $\RR^d$, from which a new integral relation for 
Gegenbuer polynomials of different indexes is deduced. The latter result is used to derive closed formulas for 
reproducing kernels of orthogonal polynomials on the unit cube and on the unit ball. 
\end{abstract}

\maketitle

\section{Introduction}
\setcounter{equation}{0}

Let $\RR_+^d = \{ x \in \RR^d: x_1 \ge 0, \ldots, x_d \ge0\}$ be the positive quadrant of $\RR^d$. Let $T^{d-1}$ be the
simplex in $\RR^{d-1}$ defined by $T^{d-1} : = \{ y \in \RR_+^{d-1}: |y| \le 1\}$, where $|y| := y_1 + \cdots + y_{d-1}$. 
Written in homogeneous coordinates, this simplex is equivalent to 
$$
       \CT^{d} : = \{ y \in \RR_+^{d}: |y| =1\}.
$$
Observe that $\CT^2$ reduces to the interval $[0,1]$. 
The main result in this paper is the following integral identity and its applications. 

\begin{thm} \label{thm:main}
Let $d =2,3,\ldots$ and $\lb = (\l_1,\ldots, \l_d)$ with $\l_i > 0$, $1 \le i \le d$. For $x \in \RR^d$ and $r \ge 0$ such that 
$r|x_i| \le 1$, $1 \le i\le d$,  
\begin{equation} \label{eq:main}
    \prod_{i=1}^d  \frac{ 1}{(1-  r x_i)^{\l_i}} =  \frac{\Gamma(|\lb|)}{\prod_{i=1}^{d} \Gamma(\l_i)} \int_{\CT^d} \frac{1}{ (1- r \la x, u \ra)^{| \lb|}} \prod_{i=1}^d u_i^{\l_i-1} du.
\end{equation}
\end{thm}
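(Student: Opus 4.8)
The plan is to express every factor on the left-hand side through the elementary Gamma-function (Laplace-transform) identity $s^{-a}=\Gamma(a)^{-1}\int_0^\infty t^{a-1}e^{-st}\,dt$ (valid for $a>0$, $s>0$), reorganize the resulting multiple integral over the positive orthant $\RR_+^d$ by a ``polar'' substitution adapted to the simplex, and then integrate out the radial variable to produce the kernel $(1-r\la x,u\ra)^{-|\lb|}$. Since the hypothesis $r|x_i|\le 1$ forces $1-rx_i\in[0,2]$, I will first carry this out under the slightly stronger assumption that $rx_i<1$ for every $i$ (so that each $1-rx_i>0$), and dispose of the remaining boundary case afterwards.

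Assuming $rx_i<1$ for all $i$, write $(1-rx_i)^{-\l_i}=\Gamma(\l_i)^{-1}\int_0^\infty t_i^{\l_i-1}e^{-(1-rx_i)t_i}\,dt_i$, multiply over $i$, and use Tonelli (all integrands nonnegative) to get
\[
  \prod_{i=1}^d\frac{1}{(1-rx_i)^{\l_i}}
  =\frac{1}{\prod_{i=1}^d\Gamma(\l_i)}\int_{\RR_+^d}\Big(\prod_{i=1}^d t_i^{\l_i-1}\Big)\,e^{-\sum_{i=1}^d(1-rx_i)t_i}\,dt .
\]
Now substitute $t_i=\rho u_i$ with $\rho=|t|\in(0,\infty)$ and $u\in\CT^d$; the Jacobian is $\rho^{d-1}$, the weight transforms as $\prod_i t_i^{\l_i-1}=\rho^{|\lb|-d}\prod_i u_i^{\l_i-1}$ so the combined radial factor is $\rho^{|\lb|-1}$, and the exponent collapses to $\sum_i(1-rx_i)u_i=1-r\la x,u\ra$ because $\sum_i u_i=1$ on $\CT^d$ (note $1-r\la x,u\ra=\sum_i u_i(1-rx_i)>0$ automatically under our assumption). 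Hence
\[
  \prod_{i=1}^d\frac{1}{(1-rx_i)^{\l_i}}
  =\frac{1}{\prod_{i=1}^d\Gamma(\l_i)}\int_{\CT^d}\Big(\prod_{i=1}^d u_i^{\l_i-1}\Big)\Big(\int_0^\infty\rho^{|\lb|-1}e^{-\rho(1-r\la x,u\ra)}\,d\rho\Big)du ,
\]
and the inner integral equals $\Gamma(|\lb|)(1-r\la x,u\ra)^{-|\lb|}$ by the same Laplace-transform identity. This is precisely \eqref{eq:main}.

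It remains to treat the case, still allowed by $r|x_i|\le 1$, in which $rx_i=1$ for some index $i$. Then $1-rx_i=0$ while each other factor $(1-rx_j)^{-\l_j}$ is positive (since $1-rx_j\ge 1-r|x_j|\ge 0$), so the left-hand side of \eqref{eq:main} is $+\infty$; and on the right, near the vertex $u=e_i$ the affine function $1-r\la x,u\ra=\sum_j u_j(1-rx_j)$ vanishes to first order in the distance $s$ to $e_i$, so the integrand behaves like $s^{-\l_i-1}$ there and the integral diverges as well. Thus both sides equal $+\infty$, finishing the proof. I expect this boundary bookkeeping, rather than the main computation, to be the only mildly delicate point, since all the interchanges of integration above involve nonnegative integrands and so require nothing beyond Tonelli.

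As an independent check one can instead expand both sides as power series in $r$ on $\{\,|rx_i|<1\,\}$: the coefficient of $r^n$ on the left is $\sum_{|k|=n}\prod_i\frac{(\l_i)_{k_i}}{k_i!}x_i^{k_i}$ by the binomial series, while on the right the multinomial expansion of $\la x,u\ra^n$, together with the Dirichlet integral $\int_{\CT^d}\prod_i u_i^{\l_i+k_i-1}\,du=\prod_i\Gamma(\l_i+k_i)/\Gamma(|\lb|+n)$ and the identity $\Gamma(|\lb|)(|\lb|)_n/\Gamma(|\lb|+n)=1$, yields the same coefficient.
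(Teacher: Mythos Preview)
Your argument is correct, and your main route is genuinely different from the paper's. The paper proves \eqref{eq:main} by expanding $(1-r\la x,u\ra)^{-|\lb|}$ with the multinomial theorem, integrating term by term via the Dirichlet (multivariate beta) integral $\int_{\CT^d}\prod_i u_i^{\l_i+n_i-1}\,du=\prod_i\Gamma(\l_i+n_i)/\Gamma(|\lb|+|n|)$, and resumming the resulting product of binomial series; your closing ``independent check'' is exactly this argument. Your primary proof instead writes each factor $(1-rx_i)^{-\l_i}$ as a Gamma integral, fuses them over $\RR_+^d$, and applies the polar substitution $t=\rho u$ to separate a radial Gamma integral from the simplex integral. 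This is slicker in that no series expansion or term-by-term bookkeeping is needed---everything is Tonelli with nonnegative integrands---and it makes transparent why the Dirichlet constant $\Gamma(|\lb|)/\prod_i\Gamma(\l_i)$ appears. The paper also records (in an appendix) a third proof, the author's original one, which derives the $d=2$ case from an addition formula for Gegenbauer polynomials and then inducts on $d$; that argument is considerably longer but explains how the identity was first found. Finally, you treat the boundary case $rx_i=1$ explicitly (both sides equal $+\infty$), whereas the paper's expansion tacitly needs the strict inequality to justify convergence of the multinomial series; your local analysis giving the $s^{-\l_i-1}$ blow-up near the vertex $e_i$ is correct.
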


Although the identity \eqref{eq:main} is elementary, it leads to new identities on the Gegenbauer polynomials that have
interesting applications to orthogonal polynomials in several variables. For $\l > -1/2$, define 
$$
  w_\l(t) := (1-t^2)^{\l -1/2}, \qquad -1 < t < 1. 
$$
The Gegenbauer polynomial $C_n^\l$ is defined as the orthogonal polynomial of degree $n$ with respect to $w_\l$, 
normalized by $C_n^\l(1) = \binom{n+2\l -1}{n}$. It satisfies the relation
$$
   c_\l \int_{-1}^1 C_n^\l(t) C_m^\l(t) w_\l(t) dt = h_n^\l \d_{n,m}, \qquad h_n^\l := \frac{\l}{n+\l} C_n^\l(1), 
$$
where $c_\l$ is the normalization constant of $w_\l$. Let $\wt C_n^\l(t): =C_n^\l(t) / \sqrt{h_n^\l}$. Then $\wt C_n^\l$ is the $n$-th orthonormal polynomial with respect
to $w_\l$. It follows readily that 
$$
    Z_n^\l (t) := \wt C_n^\l(1)\wt C_n^\l(t) = \frac{n+\l}{\l} C_n^\l(t). 
$$ 
The Gegenbauer polynomials satisfy the following generating relations: for $0 \le r < 1$,
\begin{equation}\label{eq:generatingC}
 \f{1}{(1-2 r t + r^2)^{\l}} = \sum_{n=0}^\infty C_n^\l(t) r^n \quad \hbox{and}\quad
  \f{1-r^2}{(1-2 r t + r^2)^{\l+1}} = \sum_{n=0}^\infty  Z_n^\l(t) r^n.
\end{equation}

One application of Theorem \ref{thm:main} is a closed formula for the reproducing kernels of product Gegenbauer
polynomials on the cube $[-1,1]^d$, which will be discussed in Subsection 3.2. Another application of 
Theorem \ref{thm:main}, more directly, gives two new identities for the Gegenbauer polynomials. 

\begin{thm} \label{thm:Gegen}
For $\l > -1/2$ and $\mu > 0$, 
\begin{equation} \label{eq:Gegen-1}
   C_n^\l(x) = c_\mu \s_{\l,\mu} \int_{-1}^1 \int_0^1 C_n^{\l+\mu} (s x +(1-s) y) s^{\l-1} (1-s)^{\mu-1}ds\, w_\mu(y) dy,
\end{equation}
and, furthermore, 
\begin{equation} \label{eq:Gegen-2}
 Z_n^\l(x) = c_\mu \s_{\l+1,\mu} \int_{-1}^1 \int_0^1 Z_n^{\l+\mu} (s x +(1-s) y) s^{\l} (1-s)^{\mu-1} ds\, w_\mu(y)dy,
\end{equation}
where
$$
  \s_{\l,\mu} := \frac{\Gamma( \l+\mu)}{\Gamma(\l) \Gamma(\mu)} \quad \hbox{and} \quad c_\mu:= 
      \frac{\Gamma( \mu+1)}{\Gamma(\f12) \Gamma(\mu+\f12)}.
$$
\end{thm}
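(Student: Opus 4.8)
The plan is to deduce both formulas from the $d=2$ case of \thmref{thm:main}, combined with the two generating relations \eqref{eq:generatingC} and the orthogonality of the Gegenbauer polynomials. Since $\CT^2$ is the interval $[0,1]$ (write $u=(s,1-s)$), the identity \eqref{eq:main} for $d=2$ with indices $\l$ and $\mu$ is, after setting $a:=1-rx_1$, $b:=1-rx_2$ and using $1-r\la x,u\ra = sa+(1-s)b$, exactly the elementary beta integral
\begin{equation*}
   \frac{1}{a^{\l} b^{\mu}} = \s_{\l,\mu}\int_0^1 \frac{s^{\l-1}(1-s)^{\mu-1}}{(sa+(1-s)b)^{\l+\mu}}\,ds,\qquad a,b>0 .
\end{equation*}
The one genuine idea is to choose $a=1-2rx+r^2$ and $b=1-2ry+r^2$ with $0\le r<1$ and $x,y\in[-1,1]$: because $s+(1-s)=1$ absorbs the $r^2$ term, one gets $sa+(1-s)b = 1-2r(sx+(1-s)y)+r^2$, so the denominator on the right is precisely that of the Gegenbauer generating function evaluated at the convex combination $sx+(1-s)y$.

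Next I would multiply the displayed identity by $c_\mu w_\mu(y)$ and integrate over $y\in[-1,1]$. On the left this produces the scalar factor $c_\mu\int_{-1}^1 b^{-\mu}w_\mu(y)\,dy$; expanding $b^{-\mu}=(1-2ry+r^2)^{-\mu}=\sum_{n\ge 0}C_n^\mu(y)r^n$ by \eqref{eq:generatingC} and integrating term by term, every $n\ge 1$ term vanishes by orthogonality of $C_n^\mu$ to constants, while the $n=0$ term equals $c_\mu\int_{-1}^1 w_\mu(y)\,dy=1$; hence this factor is $1$ and the left-hand side collapses to $a^{-\l}=(1-2rx+r^2)^{-\l}=\sum_n C_n^\l(x)r^n$. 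On the right-hand side I expand $(sa+(1-s)b)^{-(\l+\mu)}=\sum_n C_n^{\l+\mu}(sx+(1-s)y)r^n$, interchange the summation with the integrals (legitimate since the series converges absolutely, uniformly for $r$ in a compact subset of $[0,1)$ and $x,y\in[-1,1]$), and compare coefficients of $r^n$. This yields \eqref{eq:Gegen-1}; since both sides are polynomials in $x$, the identity then extends to all real $x$.

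Formula \eqref{eq:Gegen-2} follows from the same argument run with exponents $\l+1$ and $\mu$ in place of $\l$ and $\mu$, i.e.\ starting from $a^{-(\l+1)}b^{-\mu}=\s_{\l+1,\mu}\int_0^1 s^{\l}(1-s)^{\mu-1}(sa+(1-s)b)^{-(\l+\mu+1)}\,ds$ and multiplying through by $(1-r^2)c_\mu w_\mu(y)$ before integrating in $y$. The $y$-integral again contributes the factor $1$; by the second relation in \eqref{eq:generatingC} the left-hand side becomes $(1-r^2)a^{-(\l+1)}=\sum_n Z_n^\l(x)r^n$, while on the right $(1-r^2)(sa+(1-s)b)^{-(\l+\mu+1)}=\sum_n Z_n^{\l+\mu}(sx+(1-s)y)r^n$; matching coefficients of $r^n$ gives \eqref{eq:Gegen-2}.

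I do not expect any serious obstacle: once \thmref{thm:main} is in hand, the proof is bookkeeping with generating functions. The only step needing care is the ``absorption'' of the extra factor introduced by the beta-integral splitting — recognizing that integrating against $c_\mu w_\mu(y)$ annihilates exactly the contributions of degree $n\ge 1$ and hence removes the parasitic denominator $b^{-\mu}$ (respectively $b^{-\mu}$ with the shifted exponent), leaving precisely the generating function of $C_n^\l$ (respectively of $Z_n^\l$). The convergence and the sum–integral interchange are routine for $0\le r<1$ and $x,y$ in $[-1,1]$.
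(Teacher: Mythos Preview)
Your proposal is correct and follows essentially the same route as the paper's proof: specialize \eqref{eq:main} to $d=2$ with $a=1-2rx+r^2$, $b=1-2ry+r^2$, integrate against $c_\mu w_\mu(y)\,dy$ so the left side collapses to $(1-2rx+r^2)^{-\l}$, and then match coefficients of $r^n$ via \eqref{eq:generatingC}; the second identity is obtained by the same shift $\l\mapsto\l+1$ and multiplication by $1-r^2$. Your explicit justification of why the $y$-integral contributes the factor $1$ (orthogonality of $C_n^\mu$ to constants) makes transparent what the paper states in one line.
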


These identities are new. It is known in the literature (see, for example, \cite[p. 25]{A}) that 
if $\mu > 0$ and $\l > -1/2$, then 
$$
     \frac{C_n^{\l+\mu}(x)}{C_n^{\l+\mu}(1)} = \int_{-1}^1   \frac{C_n^\l(y)}{C_n^\l (1)} d \mu_{x}(y), \qquad -1 \le x \le 1,
$$
where $d\mu_x(y)$ is strictly positive and absolutely continuous when $-1 < x<1$, and it is 
a unit mass at $y =x$ when $x^2 =1$. In comparison, the index of the Gegenbauer polynomial in the 
left hand side of the identity \eqref{eq:Gegen-1} is smaller than the index appeared in the right hand side. 

The new identities have interesting applications. Since $Z_n^\l(\la x,y\ra)$, when $\l =\f{d-2}{2}$, 
is the reproducing kernel of spherical harmonics of degree $n$ on the unit sphere $\SS^{d-1}$, the identity 
\eqref{eq:Gegen-2} can be used to derive a closed fromula for the reproducing kernels of orthogonal 
polynomials with respect to $W_{\l,\mu}(x) =\|x\|^{2\l}(1-\|x\|^2)^{\mu-\f12}$ on the unit ball, which is known
only in the case $\lambda =0$ previously. Such a formula plays an essential role for studying Fourier orthogonal 
expansions. We shall use it, as an application, to determine the critical index for Ces\`aro means of orthogonal 
expansion with respect to $W_{\l,\mu}$ in Subsection 3.3.

The paper is organized as follows. We prove the main theorems in the following section and discuss applications 
of the main results in orthogonal polynomials of several variables in Section 3. 

\section{Proof of Theorems \ref{thm:main} and \ref{thm:Gegen}}
\setcounter{equation}{0}
 
The proof of Theorem 1 uses multinomial theorem. 

\medskip\noindent
{\it Proof of Theorem \ref{thm:main}}.
Observe that
\[   |r \langle x,u\rangle | \leq r \sum_{i=1}^d |x_i| u_i < \sum_{i=1}^d u_i = 1, \]
so that one can use the multinomial theorem
\[    (1-s_1-\cdots-s_d)^{\nu} = \sum_{n_1=0}^\infty \cdots \sum_{n_d=0}^\infty \frac{(-\nu)_{|\mathbf{n}|}}{n_1!\ldots n_d!} s_1^{n_1}\cdots s_d^{n_d}  \]
(see, e.g., \cite[Eq. (220) on p.~329]{SriKar}). We then have
\begin{align*}
 &\int_{\mathcal{T}^d} \frac{1}{(1-r\langle x,u \rangle)^{|\boldsymbol{\lambda}|}} \prod_{i=1}^d u_i^{\lambda_i-1}\, du  \\
  & \qquad\qquad
   = \sum_{n_1=0}^\infty \cdots \sum_{n_d=0}^\infty \frac{(|\boldsymbol{\lambda}|)_{|\mathbf{n}|}}{n_1!\cdots n_d!} r^{|\mathbf{n}|} x_1^{n_1}\cdots x_d^{n_d}
    \int_{\mathcal{T}^d} \prod_{i=1}^d u_i^{\lambda_i+n_i-1}\, du . 
\end{align*}
The integral is now a multivariate beta integral
\begin{align*} 
\int_{\mathcal{T}^d} \prod_{i=1}^d u_i^{\lambda_i+n_i-1}\, du = & 
  \int_{T^{d-1}} u_1^{\lambda_1+n_1-1}\cdots u_{d-1}^{\lambda_{d-1}+n_{d-1}-1}
     (1-|u|)^{\lambda_d+n_d-1} \, du_1\cdots du_{d-1} \\
  = &  \frac{\Gamma(\lambda_1+n_1)\cdots \Gamma(\lambda_d+n_d)}{\Gamma(|\boldsymbol{\lambda}|+|\mathbf{n}|)}, 
\end{align*}
so that
\begin{align*}
 \int_{\mathcal{T}^d} & \frac{1}{(1-r\langle x,u \rangle)^{|\boldsymbol{\lambda}|}} \prod_{i=1}^d u_i^{\lambda_i-1}\, du  \\
   & \qquad = \sum_{n_1=0}^\infty \cdots \sum_{n_d=0}^\infty \frac{(|\boldsymbol{\lambda}|)_{|\mathbf{n}|}}{\Gamma(|\boldsymbol{\lambda}|+|\mathbf{n}|)}
  \frac{\Gamma(\lambda_1+n_1)\cdots \Gamma(\lambda_d+n_d)}{n_1!\cdots n_d!} r^{|\mathbf{n}|} x_1^{n_1}\cdots x_d^{n_d}.
\end{align*}
Now $(|\boldsymbol{\lambda}|)_{|\mathbf{n}|} = \Gamma(|\boldsymbol{\lambda}|+|\mathbf{n}|)/\Gamma(|\boldsymbol{\lambda}|)$, hence the multiple sum
factors into $d$ single sums, and by the binomial theorem
\[    \sum_{n=0}^\infty \frac{(\lambda)_n}{n!} (rx)^n = \frac{1}{(1-rx)^{\lambda}}, \qquad |rx| < 1, \]
the result in \eqref{eq:main} follows. 
\qed
 
\medskip

The above proof is communicated to us by Walter Van Assche. Our original proof uses the generalized 
Gegenbauer polynomials that are orthogonal with respect to the weight function 
$$
  w_{\lambda, \mu} (x) := |x|^{2\mu} (1-x^2)^{\lambda - \frac12}, 
     \qquad -1 \le x \le 1, \qquad \lambda, \mu > -1 / 2.
$$
Let $D_n^{(\lambda, \mu)}$ denote the orthonormal polynomial of degree $n$ with respect to $w_{\l,\mu}$, 
\begin{equation}\label{eq:genGegen}
c_{\lambda, \mu} \int_{-1}^1 D_n^{(\lambda, \mu)}(x)D_m^{(\lambda, \mu)}(x) 
   w_{(\lambda, \mu)} (x) d x  = \delta_{n,m},
\end{equation}
where $c_{\l, \mu}$ is the constant defined by $c_{\l,\mu} \int_{-1}^1 w_{\l,\mu} (x) dx =1$. 
The following identity is used in the original proof and it will also be needed in Section 3.3. 

\begin{prop}
For $\lambda > 0$ and $\mu >0$, 
\begin{align} \label{eq:addition}
& C_n^{\lambda+\mu}(\cos \theta \cos \phi \, t  + \sin \theta \sin \phi \,s)
  =   \sum_{m=0}^{\lfloor \f{n}2 \rfloor} \sum_{k + j = n-2m} b_{k,j,n}^{\l,\mu}
  (\cos \theta \cos \phi)^k  \\
   & \qquad \times  (\sin \theta \sin \phi)^j
  D_{n-k-j}^{(\lambda + j, \mu +k)}(\cos \theta)
   D_{n-k-j}^{(\lambda + j, \mu +k)}(\cos \phi) 
  C_k^{\mu- \f12}(t) C_j^{\lambda- \f12}(s), \notag
\end{align}
where 
$$
 b_{k,j,n}^{\l,\mu} = \frac{\Gamma(\mu - \frac12)\Gamma(\lambda - \frac12)}{
     \Gamma(\lambda + \mu)} \frac{\Gamma(\lambda + \mu + k + j +1)}{  
  (n + \lambda + \mu) \Gamma(k+ \mu -\frac12)\Gamma(j+ \lambda-\frac12)}.
$$
\end{prop}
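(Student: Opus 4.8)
The plan is to reduce the addition formula \eqref{eq:addition} to a known product/addition formula for Gegenbauer polynomials combined with the classical Gegenbauer expansion of $C_n^{\lambda+\mu}$ evaluated at an argument that is itself a convex-type combination. First I would recall the Gegenbauer addition formula in its ``geometric'' form: for the dot product of two unit vectors written in terms of a principal angle, one has an expansion of $C_n^{\lambda+\mu}(\cos\psi)$, where $\cos\psi = \cos\theta\cos\phi + \sin\theta\sin\phi\,\cos\alpha$, into a sum over lower-degree Gegenbauer polynomials in $\cos\theta$ and $\cos\phi$ with weights given by Gegenbauer polynomials in $\cos\alpha$; see Erd\'elyi or the Askey references. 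The key observation is that the product structure $ C_k^{\mu-\frac12}(t)C_j^{\lambda-\frac12}(s)$ appearing on the right of \eqref{eq:addition} is exactly what one gets by iterating this addition theorem \emph{twice} — once to split off the $\theta,\phi$ dependence against a $\mu$-type angle and once against a $\lambda$-type angle — or, more efficiently, by recognizing the argument $\cos\theta\cos\phi\,t + \sin\theta\sin\phi\,s$ as a ``double'' spherical-harmonic inner product on a product of spheres $\SS^{2\mu}\times\SS^{2\lambda}$ (when $2\mu,2\lambda$ are integers), and then appealing to analytic continuation in $\lambda,\mu$ since both sides are rational in the relevant Gamma factors.

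The concrete steps I would carry out are: (i) write $\lambda+\mu$ in the generating function \eqref{eq:generatingC}, or directly in the product formula, so that $C_n^{\lambda+\mu}$ of the stated argument is a polynomial in $\cos\theta,\sin\theta,\cos\phi,\sin\phi,t,s$ of total degree $n$; (ii) apply the product formula for Gegenbauer polynomials $C_n^{\lambda+\mu}(uv + \sqrt{1-u^2}\sqrt{1-v^2}\,\xi)$ — integrating out $\xi$ against $w_{\lambda+\mu-1/2}$ reproduces $C_n^{\lambda+\mu}(u)C_n^{\lambda+\mu}(v)$ up to constants — but here instead of integrating we keep the $\xi = $ (something) dependence and expand in Gegenbauer polynomials of $\xi$; (iii) substitute $u = \cos\theta$, $v=\cos\phi$, and then iterate: $t$ and $s$ themselves play the role of cosines of further angles, and the factors $D_{n-k-j}^{(\lambda+j,\mu+k)}$ arise precisely as the radial (generalized Gegenbauer) pieces when one decomposes a polynomial on $\SS^{2\lambda+2\mu}$ adapted to the splitting of coordinates into a block of size $2\mu+1+2k$ and a block of size $2\lambda+1+2j$ — this is the standard mechanism by which $w_{\lambda,\mu}$-orthogonal polynomials $D_n^{(\lambda,\mu)}$ appear in spherical-harmonic decompositions; (iv) collect the constants into $b_{k,j,n}^{\lambda,\mu}$ by matching norms, using $h_n^\lambda$ and the beta-integral evaluations, and check the formula at a special value (e.g. $\theta=\phi=0$, which forces $j=0$ and collapses the sum, or the top-degree term $m=0$, $k+j=n$) to pin down the normalization.

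The main obstacle I expect is step (iii): correctly identifying \emph{which} generalized Gegenbauer polynomial $D_{n-k-j}^{(\lambda+j,\mu+k)}$ appears and with which shifted parameters, and getting the combinatorics of the triple sum over $m,k,j$ to line up with a genuine (iterated) addition theorem rather than a formal rearrangement. In particular one must verify that the ``cross'' terms mixing the two spheres organize themselves so that the remaining angular dependence in $\theta$ (resp. $\phi$) is captured by a \emph{single} polynomial $D$ in $\cos\theta$ (resp. $\cos\phi$) of the same degree $n-k-j$, with the index shift $\lambda\mapsto\lambda+j$, $\mu\mapsto\mu+k$ forced by the degrees $j,k$ of the Gegenbauer factors $C_j^{\lambda-1/2},C_k^{\mu-1/2}$ already peeled off. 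Once the structure is in place, the constant $b_{k,j,n}^{\lambda,\mu}$ is determined by a routine, if lengthy, product of Gamma functions, and the restriction $\lambda,\mu>0$ (rather than $>-1/2$) is exactly what is needed for the beta integrals defining the $D$'s and the expansion coefficients to converge.
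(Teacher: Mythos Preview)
The paper does not actually prove this proposition: immediately after the statement it records that \eqref{eq:addition} ``first appeared in \cite[p.~242, (4.7)]{Koor}, proved using a group theoretic method, but the constants were not given explicitly there. An analytic proof with explicit constants was given in \cite[Theorem 2.3]{X97a}.'' So there is no in-paper proof to compare against; the proposition is quoted from the literature as a tool.

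That said, your sketch is very much in the spirit of the cited Koornwinder proof. Interpreting $\cos\theta\cos\phi\,t+\sin\theta\sin\phi\,s$ as the inner product of two unit vectors on a product sphere $\SS^{2\mu}\times\SS^{2\lambda}$ (for integer $2\mu,2\lambda$), decomposing the zonal harmonic $C_n^{\lambda+\mu}$ according to the block splitting, and then continuing analytically in $\lambda,\mu$ is exactly the group-theoretic mechanism Koornwinder uses; the $D^{(\lambda+j,\mu+k)}$ polynomials are indeed the radial pieces in that decomposition, and the index shifts $\lambda\mapsto\lambda+j$, $\mu\mapsto\mu+k$ come out as you anticipate. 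Your honest flag on step~(iii) is apt: in Koornwinder's argument this identification is handled by representation theory (branching rules for $O(p+q)\downarrow O(p)\times O(q)$), not by an ad hoc iteration of the scalar Gegenbauer addition theorem, and the ``iterate twice'' language in your proposal is too loose to yield the precise form without that structural input. The analytic proof in \cite{X97a} proceeds differently, deriving the expansion directly and computing the constants $b_{k,j,n}^{\lambda,\mu}$ explicitly; if you want a self-contained argument with all constants pinned down, that reference is the one to follow rather than trying to reconstruct the normalization from special values as in your step~(iv).
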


The identity \eqref{eq:addition} first appeared in \cite[p. 242, (4.7)]{Koor}, proved using a group theoretic method, 
but the constants were not given explicitly there. An analytic proof with explicit constants was
given in \cite[Theorem 2.3]{X97a}.  Our original proof uses the integral of \eqref{eq:addition} with respect to 
$w_{\l,\mu}(x) dx$ to prove
\begin{align*}
 & \frac{1}{(1-2 r s + r^2)^{\l+1}(1-2 r t + r^2)^{\mu+1} } \\
    & \qquad\qquad =  c_{\l+\f12,\mu+\f12} \int_{0}^1  \frac{1}{(1-2 r (y t + (1-y) s) + r^2)^{\l+\mu+2}}
  y^{\l}(1-y)^{\mu} dy,    
\end{align*}
which is equivalent to the case $d=2$ of \eqref{eq:main} and the case $d > 2$ follows from induction.

In the case of $\l_i =1$ for all $1 \le i \le d$, another elementary proof of \eqref{eq:main} can be deduced from the 
Hermite-Genocchi formula 
\begin{equation} \label{eq:HG}
  [x_1,\ldots, x_d] f =  \int_{\CT^d} f^{(d-1)} (x_1t_1+\ldots + x_d t_d) dt,
\end{equation}
where $[x_1,\ldots,x_d]f$ denotes the divided difference of $f$.  

\medskip\noindent
{\it Proof of Theorem \ref{thm:Gegen}}.
From \eqref{eq:main} with $d =2$, it follows that
$$
 \frac{1}{(1-2r x + r^2)^\l (1-2r y + r^2)^\mu} =  \s_{\l,\mu}
    \int_0^1 \frac{ s^{\l-1} (1-s)^{\mu-1} ds}{(1- 2 r (s x + (1-s) y) + r^2)^{\l+\mu}}
$$
for $0 \le r < 1$. Integrating with respect to $(1-y^2)^{\mu-1/2}$, we obtain, by the first identity of 
\eqref{eq:generatingC} that 
$$
  \frac{1}{(1-2r x + r^2)^\l}  = c_\mu \s_{\l,\mu} \int_{-1}^1
    \int_0^1 \frac{ s^{\l-1} (1-s)^{\mu-1} (1-y^2)^{\mu-\f12}ds dy}{(1- 2 r (s x + (1-s) y) + r^2)^{\l+\mu}}.
$$
Expanding both sides as power series of $r$, by the first identity of \eqref{eq:generatingC}, the 
identity \eqref{eq:Gegen-1} follows from comparing the coefficients of $r^n$. 

Now, replacing $\l$ by $\l+1$ in the last identity and multiplying by $1-r^2$, we obtain 
$$
  \frac{1-r^2}{(1-2r x + r^2)^{\l+1}}  = c_\mu \s_{\l+1,\mu} \int_{-1}^1
    \int_0^1 \frac{ (1-r^2) s^{\l} (1-s)^{\mu-1} (1-y^2)^{\mu-\f12}ds dy}{(1- 2 r (s x + (1-s) y) + r^2)^{\l+\mu+1}}.
$$
Expanding both sides as power series of $r$, by the second identity of \eqref{eq:generatingC}, the 
identity \eqref{eq:Gegen-2} follows from comparing the coefficients of $r^n$.
\qed

\section{Application to orthogonal polynomials of several variables}
\setcounter{equation}{0}

In the first subsection we recall basics on orthogonal polynomials of several variables and 
Fourier expansions in terms of them (cf. \cite{DX}). Product Gegenbauer polynomials are discussed 
in the second subsection and orthogonal polynomials on the unit ball are discussed in the third subsection. 

\subsection{Orthogonal polynomials of several variables} 
Let $W$ be a nonnegative weight function on a domain $\Omega$ of $\RR^d$, normalized so that  $\int_\Omega W(x) dx =1$.
Let $\CV_n^d(W)$ be the space of orthogonal polynomials of degree $n$ with respect to the inner product
$$
   \la f, g \ra_W : = \int_\Omega f(x) g(x) W(x) dx. 
$$
It is known that $r_n^d: = \dim \CV_n^d = \binom{n+d-1}{n}$. Let $\{P_j^n: 1 \le j \le r_n^d\}$ be an orthonormal basis of 
$\CV_n^d(W)$; that is, 
$$
     \int_\Omega P_j^n(x) P_k^m(x) W(x) dx = \delta_{j,k}\delta_{n,m}. 
$$ 
The Fourier coefficient $\wh f_j^n$ of  $f\in L^2(W,\Omega)$ is defined by $\wh f_j^n:= \int_\Omega f(x) P_j^n(x) W(x) dx$
and the Fourier orthogonal expansion of $f \in L^2(W,\Omega)$ is defined by 
$$
  f = \sum_{n=0}^\infty \proj_n f \quad \hbox{with}\quad \proj_n f(x) := \sum_{j=0}^n \wh f_j^n P_j^n(x). 
$$
The projection operator $\proj_n: L^2(W,\Omega) \mapsto \CV_n^d(W)$ can be written as
$$
  \proj_n f(x) = \int_{\Omega} f(y) P_n(x,y) W(y) dy  \quad \hbox{with}\quad 
  P_n(x,y) := \sum_{j=1}^{r_n^d} P_j^n(x) P_j^n(y), 
$$
where $P_n(\cdot,\cdot)$ is the reproducing kernel of $\CV_n^d(W)$. For $\d > 0$,
the Ces\`aro $(C,\d)$ means $S_n^\d f$ of the Fourier orthogonal expansion is defined by 
$$
  S_n^\d f := \f{1}{\binom{n+\d}{d}} \sum_{k=0}^n \binom{n-k+\d}{n-k} \proj_n f,
$$
which can be written as an integral of $f$ against the Ces\`aro $(C,\d)$ kernel  
$$
  K_n^\d(x,y) :=  \f{1}{\binom{n+\d}{d}} \sum_{k=0}^n \binom{n-k+\d}{n-k} P_k(x,y). 
$$
To emphasis the dependence on $W$, we will use notations such as $S_n^\d(W; f)$ and
$K_n^\d(W;\cdot,\cdot)$ in the rest of this section.

\subsection{Product Gegenbauer polynomials on the cube}

For $\lb = (\l_1,\ldots, \l_d)$, $\l_i > -\f12$, we consider the product Gegenbauer weight function 
$$
   W_\lb(x) = W_{\l,d}(x): =  c_\lb \prod_{i=1}^d w_{\l_i}(x_i), \qquad  x\in [-1,1]^d,  
$$
where $c_\lb = \prod_{i=1}^d c_{\l_i}$. It is easy to see that the product Gegenbauer polynomials are 
orthogonal polynomials and the reproducing kernel $P(W_\lb; \cdot,\cdot)$ of $\CV_n^d(W_\lb)$ is given by 
$$
P(W_\lb; x, y) =  \sum_{|\a| =n} \frac{1}{H_n^\l} P_\a(x) P_\a(y) \quad\hbox{with} \quad
      P_\a(x) := \prod_{i=1}^d C_{\a_i}^{\l_i} (x_i), 
$$ 
where $H_n^\l = \prod_{i=1}^d h_{\a_i}^{\l_i}$. The product formula of the Gegenbauer polynomials states that 
$$
 \frac{C_n^\l(x) C_n^\l (y)}{C_n^\l(1)} = c_{\l - \f12} \int_{-1}^1 C_n^\l(x y +\sqrt{1-x^2} \sqrt{1-y^2} t)(1-t^2)^{\l-1} dt, 
$$
which implies immediately that 
$$
  K_n^\d(W_\lb;x,y) = c_{\lb - \f{\one}{2}} \int_{[-1,1]^d} K_n^\d (W_\lb; z(x, y,t), \one) \prod_{i=1}^d (1-t_i^2)^{\l_i-1} dt_i,
$$
where $z(x, y,t) := (x_1y_1 + \sqrt{1-x_1^2} \sqrt{1-y_1^2} t_1, \ldots, x_d y_d+ \sqrt{1-x_d^2} \sqrt{1-y_d^2} t_d)$.
Below we deduce a closed form formula for $P_n(W_\lb; x,\one)$ and the $(C,\d)$ kernel $K_n^\d (W_\lb; x, \one)$. 
 
\begin{thm} \label{prop:prodcutC}
Let $\one = (1,\ldots,1)$. Then, 
\begin{equation}\label{eq:Pn-Gegen}
  P_n(W_\lb; x, \one) =  \sum_{m=0}^{\min \{\lfloor \frac{n}2 \rfloor, d-1\}}  (-1)^m \binom{d-1}{m}
     \s_{\lb} \int_{\CT^d} Z_{n-m}^{|\lb| + d-1} (\la x, y \ra) \prod_{i=1}^d y_i^{\l_i} dy,
\end{equation}
where $\s_\lb = \Gamma(|\lb|+d) /\prod_{i=1}^d \Gamma(\l_i+1)$. Furthermore, 
\begin{equation*}
   K_n^{d-2} (W_\lb; x, \one) = \frac{1}{\binom{n+ d-2}{n}} \sum_{m=0}^{\min \{n,d-1\}} \binom{d-1}{m} 
      \s_{\lb} \int_{\CT^d} Z_{n-m}^{|\lb| + d-1} (\la x, y \ra) \prod_{i=1}^d y_i^{\l_i} dy.
\end{equation*} 
\end{thm}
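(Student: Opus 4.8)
The plan is to reduce the formula for $P_n(W_\lb;x,\one)$ to the product formula for Gegenbauer polynomials combined with the main identity \eqref{eq:main}, and then obtain the Ces\`aro kernel by Abel summation. First I would write the product-Gegenbauer reproducing kernel at $y=\one$: since $C_n^{\l_i}(1) = \binom{n+2\l_i-1}{n}$ and $h_n^{\l_i} = \frac{\l_i}{n+\l_i}C_n^{\l_i}(1)$, the normalization constants collapse, and
$$
   P_n(W_\lb; x, \one) = \sum_{|\a|=n} \prod_{i=1}^d \frac{C_{\a_i}^{\l_i}(x_i)\,C_{\a_i}^{\l_i}(1)}{h_{\a_i}^{\l_i}}
      = \sum_{|\a|=n} \prod_{i=1}^d Z_{\a_i}^{\l_i}(x_i),
$$
so the task is to identify $\sum_{|\a|=n}\prod_i Z_{\a_i}^{\l_i}(x_i)$ in closed form. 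The natural device is the generating function: by the second identity in \eqref{eq:generatingC},
$$
   \sum_{n=0}^\infty \Bigl(\sum_{|\a|=n}\prod_{i=1}^d Z_{\a_i}^{\l_i}(x_i)\Bigr) r^n
     = \prod_{i=1}^d \frac{1-r^2}{(1-2rx_i+r^2)^{\l_i+1}}.
$$

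Next I would apply the main theorem \eqref{eq:main}, but in a form tied to the Gegenbauer variable: one must pass from $\frac{1}{(1-rx_i)^{\l_i}}$ to $\frac{1}{(1-2rx_i+r^2)^{\l_i+1}}$. The clean route is to use the $d=2$ case of \eqref{eq:main} iteratively (equivalently, the planar identity displayed just before the proof of Theorem \ref{thm:Gegen}) to combine the $d$ factors $\frac{1}{(1-2rx_i+r^2)^{\l_i+1}}$ into a single factor $\frac{1}{(1-2r\la x,u\ra+r^2)^{|\lb|+d}}$ integrated over $\CT^d$ against $\prod_i u_i^{\l_i}\,du$, with constant $\s_\lb = \Gamma(|\lb|+d)/\prod_i\Gamma(\l_i+1)$. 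This yields
$$
   \prod_{i=1}^d \frac{1}{(1-2rx_i+r^2)^{\l_i+1}}
     = \s_\lb \int_{\CT^d} \frac{1}{(1-2r\la x,u\ra+r^2)^{|\lb|+d}} \prod_{i=1}^d u_i^{\l_i}\,du.
$$
Now the prefactor $1-r^2$ must be absorbed. Writing $1-r^2 = (1-2r\la x,u\ra+r^2) - 2r(r-\la x,u\ra)$ is awkward; instead I would use the telescoping identity for Gegenbauer generating functions that relates index $|\lb|+d$ to $|\lb|+d-1$. Concretely, from $\frac{1-r^2}{(1-2rt+r^2)^{\nu+1}} = \sum Z_n^\nu(t) r^n$ and $\frac{1}{(1-2rt+r^2)^\nu} = \sum C_n^\nu(t) r^n$ one gets $\frac{1-r^2}{(1-2rt+r^2)^{\nu+1}} = \frac{1}{(1-2rt+r^2)^\nu} - \frac{r^2}{(1-2rt+r^2)^{\nu+1}}$; iterating, or using the standard identity $(1-r^2)/(1-2rt+r^2)^{\nu+1} = \sum_{m\ge 0} (-1)^m \binom{?}{m}\cdots$, one produces the finite combination $\sum_{m} (-1)^m\binom{d-1}{m}(1-2r\la x,u\ra+r^2)^{-(|\lb|+d-1-?)}$. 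The cleanest version: multiply the displayed identity by $1-r^2$ and expand $(1-2r\la x,u\ra+r^2)^{-(|\lb|+d)}(1-r^2) = (1-2r\la x,u\ra+r^2)^{-(|\lb|+d-1)}\cdot\frac{1-r^2}{1-2r\la x,u\ra+r^2}$; then use the binomial-type expansion that converts the extra factor $(1-r^2)^{d-1}$-type discrepancy — coming from replacing the $d$ numerators $1-r^2$ by a single one — into the alternating sum $\sum_{m=0}^{d-1}(-1)^m\binom{d-1}{m}r^{2m}(1-2r\la x,u\ra+r^2)^{-(|\lb|+d-1)}$. Matching this against $\sum_n Z_n^{|\lb|+d-1}(\la x,u\ra)r^n$ and comparing coefficients of $r^n$ gives exactly \eqref{eq:Pn-Gegen}, with the upper summation limit $\min\{\lfloor n/2\rfloor, d-1\}$ arising because the $r^{2m}$ factor forces $2m\le n$ and the binomial coefficient kills $m>d-1$.

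For the Ces\`aro part, I would apply the $(C,d-2)$ averaging operator to \eqref{eq:Pn-Gegen}. The key fact is the classical identity $\sum_{k=0}^n \binom{n-k+d-2}{n-k} Z_k^\nu(t) = $ a single $C_n$-type or $Z_n$-type object — more precisely, summing the generating function $\sum_n Z_n^\nu(t)r^n = (1-r^2)(1-2rt+r^2)^{-\nu-1}$ against $(1-r)^{-(d-1)}$ (which generates $\binom{n-k+d-2}{n-k}$) gives $(1-r^2)(1-r)^{-(d-1)}(1-2rt+r^2)^{-\nu-1} = (1+r)(1-r)^{-(d-2)}(1-2rt+r^2)^{-\nu-1}$, and one recognizes the right-hand side as a generating function whose $r^n$-coefficient, after accounting for the $\binom{n+d-2}{n}$ normalization and the shift $Z_{n-m}$, collapses the alternating sign $(-1)^m$ in \eqref{eq:Pn-Gegen} to $+1$ and changes the range to $\min\{n,d-1\}$. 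I expect the main obstacle to be precisely this bookkeeping: tracking how the finite alternating sum over $m$ interacts with the Ces\`aro weights $\binom{n-k+d-2}{n-k}$ so that the signs become positive and the summation range changes from $\lfloor n/2\rfloor$-capped to $n$-capped. This is a generating-function computation — multiply everything out, use $\frac{1+r}{(1-r)^{d-2}} = \sum \bigl[\binom{j+d-3}{j}+\binom{j+d-4}{j-1}\bigr]r^j$ type expansions, and compare coefficients — but the constants $\s_\lb$ and the binomial $\binom{d-1}{m}$ must be shepherded carefully. Everything else (the product formula reduction, the integral identity, the passage to power series) is routine given Theorems \ref{thm:main} and \ref{thm:Gegen} and the generating relations \eqref{eq:generatingC}.
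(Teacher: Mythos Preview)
Your approach to \eqref{eq:Pn-Gegen} is exactly the paper's: write $P_n(W_\lb;x,\one)=\sum_{|\a|=n}\prod_i Z_{\a_i}^{\l_i}(x_i)$, pass to the generating function $\prod_i(1-r^2)/(1-2rx_i+r^2)^{\l_i+1}$, apply Theorem~\ref{thm:main} with exponents $\l_i+1$ to collapse the product to $\s_\lb\int_{\CT^d}(1-2r\la x,u\ra+r^2)^{-|\lb|-d}\prod u_i^{\l_i}\,du$, then absorb one factor $1-r^2$ into the $Z^{|\lb|+d-1}$ generating function and expand the remaining $(1-r^2)^{d-1}$ binomially. Your meandering through ``$1-r^2=(1-2r\la x,u\ra+r^2)-2r(r-\la x,u\ra)$'' and the unspecified iteration are unnecessary; the paper simply multiplies the integral identity by $(1-r^2)^d$ and is done.

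For the Ces\`aro identity your plan is right in spirit but your algebra slips. You propose to multiply the $P_n$ generating function by $(1-r)^{-(d-1)}$; since that generating function equals $(1-r^2)^{d-1}$ times the integrand's $Z$-series, the right-hand side becomes $(1+r)^{d-1}\cdot\s_\lb\int_{\CT^d}\sum_n Z_n^{|\lb|+d-1}(\la x,u\ra)r^n\prod u_i^{\l_i}\,du$, not $(1+r)(1-r)^{-(d-2)}$ times that series as you wrote --- you dropped the $(1-r^2)^{d-1}$ factor already sitting on the right. Expanding $(1+r)^{d-1}=\sum_{m=0}^{d-1}\binom{d-1}{m}r^m$ then gives the stated formula directly, with positive signs and the range $0\le m\le\min\{n,d-1\}$. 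The paper phrases this as multiplying the base integral identity by $(1+r)^{d-1}(1-r^2)$ from the start, which is the same computation but avoids the detour through the already-expanded $P_n$ formula and the ``bookkeeping obstacle'' you anticipated; no interaction between the alternating sum and Ces\`aro weights ever needs to be tracked.
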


\begin{proof}
The identity \eqref{eq:main} implies that 
\begin{equation} \label{eq:prodCgen}
  \prod_{i=1}^d \frac{1}{(1- 2 r x_i + r^2)^{\l_i +1}} = \s_{\lb} \int_{\CT^d} \frac{1}{(1-2 \la x, y\ra + r^2)^{|\lb| +d}} 
      \prod_{i=1}^d y_i^{\l_i} dy.
\end{equation}
Multiplying by $(1-r^2)^d$ and applying \eqref{eq:generatingC}, we see that the left hand side can be expanded as
$$
    \prod_{i=1}^d \sum_{n=0}^\infty Z_{\a_i}^{\l_i} (x_i) r^n  = \sum_{n = 0}^\infty P_n(W_\lb;x,\one) r^n, 
$$
while the right hand side can be expanded, again by \eqref{eq:generatingC}, as 
\begin{align*}
  (1-r^2)^{d-1} &\sum_{n=0}^\infty  \s_{\lb+\one} \int_{\CT^d} Z_n^{|\lb|+d-1} (\la x,y \ra) \prod_{i=1}^d y_i^{\l_i} dy\, r^n \\
   & = \sum_{i=1}^{d-1}(-1)^i \binom{d-1}{i} \sum_{n=0}^\infty  \s_{\lb+\one} 
      \int_{\CT^d} Z_n^{|\lb|+d-1} (\la x,y \ra) \prod_{i=1}^d y_i^{\l_i} dy \, r^{n+2i}.
\end{align*}
The identity \eqref{eq:Pn-Gegen} follows from comparing the coefficient of $r^n$.

The second identity follows similarly. We multiply \eqref{eq:prodCgen} by $(1+r)^{d-1} (1-r^2)$ so that 
the left hand side can be expanded as
$$
   \frac1{(1-r)^{d-1}} \prod_{i=1}^d \sum_{n=0}^\infty Z_{\a_i}^{\l_i} (x_i) r^n  =
    \frac1{(1-r)^{d-1}} \sum_{n = 0}^\infty P_n(W_\lb;x,\one) r^n = \sum_{n=0}^\infty K_n^{d-2}(W_\lb;x,\one) r^n, 
$$
while the right hand side can be expanded, again by \eqref{eq:generatingC}, as 
\begin{align*}
  (1+r)^{d-1} &\sum_{n=0}^\infty  \s_{\lb+\one} \int_{\CT^d} Z_n^{|\lb|+d-1} (\la x,y \ra) \prod_{i=1}^d y_i^{\l_i} dy\, r^n. 
\end{align*}
Expanding $(1+r)^{d-1}$ in power of $r$, we can again compare the coefficient of $r^n$.
\end{proof}

Observe that the right hand side of \eqref{eq:Pn-Gegen} is a sum of at most $d$ terms. In the case of $\lb =0$, a closed
formula of $P_n(W_0; x, \one)$ was derived in \cite{X95} as a divided difference, which can be derived from 
\eqref{eq:Pn-Gegen} by applying the Hermite-Genocchi formula \eqref{eq:HG}. 

Let $k_n^\d(w_\l)$ denote 
the kernel of the $(C,\d)$ means of the Gegenbauer polynomials,
$$
  k_n^\d(w_\l;x,y) =  \f{1}{\binom{n+\d}{n}} \sum_{k=0}^n \binom{n-k+\d}{n-k} 
    \frac{1}{h_n^\l} C_k^\l(x) C_k^\l(y). 
$$
The proof of Proposition \ref{prop:prodcutC} also leads to the following proposition. 

\begin{prop} \label{prop:productCdelta}
For $\d > -1$, 
\begin{align*}
  K_n^{\d+d-1} (W_\lb; x, \one) = & \frac{1}{\binom{n+ \d + d-1}{n}} \sum_{m=0}^{\min \{n,d-1\}} \binom{d-1}{m} 
      \binom{n-m+\d}{n-m} \\
     & \times \s_{\lb+\one} \int_{\CT^d} k_{n-m}^\d(w_{|\lb| + d-1}; \la x, y \ra, 1) \prod_{i=1}^d y_i^{\l_i} dy.
\end{align*}
\end{prop}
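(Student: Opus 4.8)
The plan is to follow exactly the scheme used in the proof of Proposition~\ref{prop:prodcutC}, merely inserting the $(C,\delta)$ weights at the level of generating functions. The starting point is again the identity \eqref{eq:prodCgen}, which rewrites the product $\prod_{i=1}^d (1-2rx_i+r^2)^{-(\l_i+1)}$ as $\s_{\lb}$ times an integral over $\CT^d$ of $(1-2r\la x,y\ra+r^2)^{-(|\lb|+d)}\prod_i y_i^{\l_i}\,dy$. The key observation is that the summation operator producing $(C,\delta)$ means on a one-variable Gegenbauer expansion corresponds, at the level of generating functions, to multiplication by $(1-r)^{-(\delta+1)}$: indeed $\sum_n \binom{n+\delta}{n} r^n = (1-r)^{-(\delta+1)}$, so if $\sum_n a_n r^n = F(r)$ then $(1-r)^{-(\delta+1)}F(r) = \sum_n \big(\sum_{k=0}^n \binom{n-k+\delta}{n-k} a_k\big) r^n = \sum_n \binom{n+\delta}{n} s_n^\delta r^n$ with $s_n^\delta$ the $(C,\delta)$ partial sum.

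Concretely, I would multiply \eqref{eq:prodCgen} by $(1-r^2)^d (1-r)^{-(\delta+d)}$, written equivalently as $(1+r)^d (1-r)^{-\delta}$, and then split off one factor $(1+r)(1-r)^{-1}=(1-r)^{-1}(1+r)$ to match the target. The left-hand side becomes
\[
 \frac{1}{(1-r)^{\delta+d-1}}\,\frac{(1-r^2)^d}{\prod_{i=1}^d(1-2rx_i+r^2)^{\l_i+1}}\cdot\frac{1}{(1+r)^{-1}}
  \ \hbox{reorganized as}\ \frac{(1+r)^{d-1}}{(1-r)^{\delta+d-1}}\sum_{n=0}^\infty P_n(W_\lb;x,\one)r^n,
\]
so after expanding $(1+r)^{d-1}=\sum_{m=0}^{d-1}\binom{d-1}{m}r^m$ and $(1-r)^{-(\delta+d-1)}$ as a power series, the coefficient of $r^n$ on the left is exactly $\binom{n+\delta+d-1}{n}K_n^{\delta+d-1}(W_\lb;x,\one)$ by the generating-function characterization of $(C,\delta+d-1)$ means applied to the sequence $P_n(W_\lb;x,\one)$. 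On the right-hand side, I multiply the integral form by the same factors; using the second generating relation in \eqref{eq:generatingC} the factor $(1-r^2)$ absorbed with the $Z$-kernel produces $\sum_n \s_{\lb+\one}\int_{\CT^d} Z_n^{|\lb|+d-1}(\la x,y\ra)\prod_i y_i^{\l_i}\,dy\,r^n$, and then multiplying by $(1+r)^{d-1}(1-r)^{-\delta}$ and reading off the coefficient of $r^n$ gives, via $\sum_n\binom{n+\delta}{n}k_n^\delta(w_{|\lb|+d-1};\cdot,1)r^n$ being the generating function of the one-variable $(C,\delta)$ kernel times binomial factors, precisely $\binom{d-1}{m}\binom{n-m+\delta}{n-m}\s_{\lb+\one}\int_{\CT^d}k_{n-m}^\delta(w_{|\lb|+d-1};\la x,y\ra,1)\prod_i y_i^{\l_i}\,dy$ summed over $m$ from $0$ to $\min\{n,d-1\}$. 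Equating the two coefficients of $r^n$ and dividing by $\binom{n+\delta+d-1}{n}$ yields the claimed formula.

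The main bookkeeping obstacle is keeping the normalizations straight: one must verify that the one-variable $(C,\delta)$ kernel $k_n^\delta(w_{|\lb|+d-1};t,1)$ really is generated (up to the binomial factor $\binom{n+\delta}{n}$) by $(1-r)^{-(\delta+1)}$ times the series $\sum_n Z_n^{|\lb|+d-1}(t)r^n = (1-r^2)(1-2rt+r^2)^{-(|\lb|+d)}$, and that the left side genuinely reconstitutes the product reproducing kernel $P_n(W_\lb;x,\one)$ rather than some rescaled variant — this is where the identity $Z_n^\l = \frac{n+\l}{\l}C_n^\l$ and the definition of $h_n^\l$ must be tracked carefully, exactly as in Proposition~\ref{prop:prodcutC}. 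A secondary point is the truncation of the $m$-sum: since $(1+r)^{d-1}$ contributes no powers above $r^{d-1}$ and the $(C,\delta)$ partial sum of a sequence indexed from $0$ truncates at $n$, the range $0\le m\le\min\{n,d-1\}$ emerges automatically from the coefficient comparison, so no separate argument is needed there. Everything else is the same power-series manipulation already carried out for Proposition~\ref{prop:prodcutC}, specialized with the extra Ces\`aro weight $(1-r)^{-\delta}$ in place of the plain factor used there.
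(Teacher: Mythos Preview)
Your approach is exactly the one the paper intends (``The proof of Proposition~\ref{prop:prodcutC} also leads to the following proposition''): multiply \eqref{eq:prodCgen} by $(1-r^2)^d(1-r)^{-(\delta+d)}$ and compare coefficients. However, the allocation of factors in your displayed formula is garbled. On the left you should absorb \emph{all} of $(1-r^2)^d$ into the product to obtain $\sum_n P_n(W_\lb;x,\one)r^n$, leaving the bare factor $(1-r)^{-(\delta+d)}$ in front; the coefficient of $r^n$ is then $\binom{n+\delta+d-1}{n}K_n^{\delta+d-1}(W_\lb;x,\one)$ directly, with no $(1+r)^{d-1}$ appearing on this side. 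On the right you peel off one $(1-r^2)$ to generate $\sum_n Z_n^{|\lb|+d-1}(\la x,y\ra)r^n$ and one $(1-r)^{-(\delta+1)}$ to turn this into $\sum_n\binom{n+\delta}{n}k_n^\delta(w_{|\lb|+d-1};\la x,y\ra,1)r^n$; what remains of the multiplier is $(1-r^2)^{d-1}(1-r)^{-(d-1)}=(1+r)^{d-1}$, and expanding this binomially yields the $m$-sum. Your ``split off one factor $(1+r)(1-r)^{-1}$'' step and the expression $\frac{(1+r)^{d-1}}{(1-r)^{\delta+d-1}}\sum P_n r^n$ for the left side are both incorrect; once you route the factors as above, the rest of your argument goes through verbatim.
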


One immediate consequence of the above relation shows that $K_n^\d(W_\k; x,y)$ is nonnegative if 
$\d \ge 2 (|\lb|+d) -1$, which was proved in \cite{LiXu} by a different method.

\subsection{Orthogonal  polynomials on the unit ball}
On the ball $\BB^d = \{x: \|x\|\le 1\}$ of $\RR^d$, we consider the weight function
$$
   W_{\l,\mu} (x) := b_{\l,\mu} \|x\|^{2\l} (1-\|x\|^2)^{\mu-\f12}, \quad \l \ge 0, \quad \mu > 0, \quad x \in \BB^d, 
$$
where $b_{\l,\mu}$ is a constant so that $b_{\l,\mu} \int_{\BB^d} W_{\l,\mu}(x) dx =1$. Let $\CH_m^d$ be
the space of spherical harmonics of degree $m$ in $d$ variables. Let $\s_m^d : = \dim \CH_m^d$ and 
let $\{Y_\nu^m: 1 \le \nu \le \s_m^d\}$ be an orthonormal basis of $\CH_m^d$. Define 
$$
   P_{j,\nu}^n(x) : = P_n^{(\mu-\f12, n-2j+\l+\f{d-2}{2})} (2\|x\|^2-1) Y_\nu^{n-2j}(x), 
$$
where $P_n^{(\a,\b)}(t)$ denotes the usual Jacobi polynomial of degree $n$. 
\begin{prop}
The set $\{P_{j,\nu}^n: 1 \le \nu \le \s_{n-2j}^d, 0 \le j \le n/2\}$ is a mutually orthogonal basis of $\CV_n^d(W_{\l,\mu})$
and the norm of $P_{j,\nu}^n$ in $L^2(W_{\l,\mu}, \BB^d)$ is given by 
$$
  H_j^n := \frac{ (\l+\f{d}{2})_{n-j} (\mu+\f12)_j (n-j+\l+\mu+\f{d-1}{2})}
      { j! (\l+\mu+\f{d+1}{2})_{n-j}  (n+\l+\mu+\f{d-1}{2})}, 
$$
where $(a)_n$ denotes the Pochhammer symbol, $(a)_n := a(a+1)\cdots (a+n-1)$.
\end{prop}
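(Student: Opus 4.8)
The plan is to pass to polar coordinates and reduce the whole statement to the classical orthogonality of Jacobi polynomials. Writing $x = r\xi$ with $r = \|x\| \in [0,1]$ and $\xi \in \SS^{d-1}$, one has
\[
  \int_{\BB^d} f\, g\, W_{\l,\mu}\,dx = b_{\l,\mu}\int_0^1 \int_{\SS^{d-1}} f(r\xi)\,g(r\xi)\, r^{2\l}(1-r^2)^{\mu-\f12}\, r^{d-1}\, d\s(\xi)\,dr .
\]
Since spherical harmonics are homogeneous, $Y_\nu^{n-2j}(r\xi) = r^{n-2j} Y_\nu^{n-2j}(\xi)$, so in the product $P_{j,\nu}^n(r\xi)\,P_{k,\eta}^m(r\xi)$ the angular part factors out as $\int_{\SS^{d-1}} Y_\nu^{n-2j}(\xi)\, Y_\eta^{m-2k}(\xi)\, d\s(\xi)$, which by orthogonality of spherical harmonics of distinct degrees and orthonormality within each $\CH_\ell^d$ equals $\d_{n-2j,\,m-2k}\,\d_{\nu,\eta}$ (the normalization of $d\s$ being absorbed into $b_{\l,\mu}$).

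First I would deal with the case $n-2j = m-2k =: \ell$, the only one in which the angular integral does not already force orthogonality. Substituting $s = 2r^2-1 \in [-1,1]$, the factor $r^{2\ell}$ left by the two harmonics, the weight $r^{2\l}$, and the Jacobian $r^{d-1}\,dr$ combine, up to an explicit power of $2$, into the Jacobi weight $(1-s)^{\mu-\f12}(1+s)^{\ell+\l+\f{d-2}{2}}$, so the radial integral becomes a constant multiple of
\[
  \int_{-1}^1 P_j^{(\mu-\f12,\,\ell+\l+\f{d-2}{2})}(s)\,P_k^{(\mu-\f12,\,\ell+\l+\f{d-2}{2})}(s)\,(1-s)^{\mu-\f12}(1+s)^{\ell+\l+\f{d-2}{2}}\,ds .
\]
With $\a = \mu - \f12 > -1$ and $\b = \ell+\l+\f{d-2}{2} > -1$ this is the orthogonality relation for Jacobi polynomials, vanishing unless $j=k$ (hence $n=m$, $\nu=\eta$). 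Together with the previous paragraph this establishes the full mutual orthogonality of the system $\{P_{j,\nu}^n\}$.

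Next I would check that these are $r_n^d$ linearly independent polynomials of degree $n$ lying in $\CV_n^d(W_{\l,\mu})$. Each $P_{j,\nu}^n$ is a polynomial of degree $n$ (a degree-$j$ polynomial in $\|x\|^2$ times a degree-$(n-2j)$ harmonic), and there are $\sum_{j} \s_{n-2j}^d = \binom{n+d-1}{n} = r_n^d = \dim \CV_n^d(W_{\l,\mu})$ of them. Mutual orthogonality makes them linearly independent; applied over all degrees up to $n$ it shows that the polynomials of degree $< n$ form exactly the span of the $P_{k,\eta}^m$ with $m < n$, each of which is orthogonal to every $P_{j,\nu}^n$. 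Hence $P_{j,\nu}^n \in \CV_n^d(W_{\l,\mu})$, and by the dimension count the $P_{j,\nu}^n$ are a mutually orthogonal basis.

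Finally, setting $j = k$, $n = m$, $\nu = \eta$ in the computation above, $H_j^n$ equals $b_{\l,\mu}$ times the explicit constant produced by the substitution $s = 2r^2-1$ times the standard squared $L^2$-norm $h_j^{(\a,\b)} := \f{2^{\a+\b+1}}{2j+\a+\b+1}\,\f{\Gamma(j+\a+1)\Gamma(j+\b+1)}{\Gamma(j+1)\Gamma(j+\a+\b+1)}$ of $P_j^{(\a,\b)}$, with $\a = \mu-\f12$ and $\b = n-2j+\l+\f{d-2}{2}$. I expect the only real obstacle to be computational: inserting the Beta-integral value of $b_{\l,\mu}$, cancelling the powers of $2$ (the Gamma-duplication formula is convenient here), and rewriting the resulting ratio of Gamma functions in Pochhammer symbols to reach the stated closed form $H_j^n$. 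The argument is otherwise routine; all the care goes into the constant.
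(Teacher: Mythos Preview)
Your proposal is correct and is exactly the approach the paper has in mind: the paper's proof is a one-line remark that the case $\l=0$ is worked out in \cite{DX} via spherical polar coordinates $x=r x'$ and that the general case follows similarly, which is precisely the reduction to Jacobi orthogonality (after $s=2r^2-1$) and the dimension count you describe. In effect you have written out the details the paper omits; the only remaining work, as you note, is the bookkeeping of constants to reach the displayed $H_j^n$.
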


Using the spherical polar coordinates $x = r x'$, $0 \le r  \le 1$ and $x' \in \SS^{d-1}$, the case $\l =0$ 
was worked out explicitly in \cite{DX} and the general case follows similarly. 

In terms of this basis, the reproducing kernel $P_n(W_{\l,\mu};\cdot,\cdot)$ can be written as
\begin{align} \label{eq:Pn-ball}
  P_n(W_{\l,\mu}; x,y) = \sum_{0 \le j \le n/2} \sum_{\nu =1}^{\s_{n-2j}^d} \frac{1}{H_{j,n}}P_{j,\nu}^n(x)P_{j,\nu}^n(y). 
\end{align}
Our main result in this section is the following closed form of this kernel.

\begin{thm} \label{thm:repodBall}
For $\l > 0$ and $\mu > 0$, 
\begin{align}\label{eq:reprodBall}
P_n(W_{\l,\mu}; x,y)   = a_{\l,\mu} \int_{-1}^1 \int_0^1 \int_{-1}^1 
  & Z_n^{\l+\mu+\frac{d-1}{2}}  (\zeta(x,y, u,v,t)) (1-t^2)^{\mu-1} dt \\
  &   \times  u^{\l-1} (1-u)^{\f{d-2}{2}} du (1-v^2)^{\l-\f12} dv,\notag
\end{align}
where $a_{\l,\mu}$ is a constant such that the integral is 1 if $n =0$ and 
$$
 \zeta(x,y, u,v,t): = \|x\| \, \|y\| u v +  \la x, y \ra (1-u) + \sqrt{1-\|x\|^2} \sqrt{1-\|y\|^2}\, t ;
$$
furthermore, if $\l > 0$ and $\mu =0$, then 
\begin{align} \label{eq:repordBall2}
P_n(W_{\l,0}; x,y)   = a_{\l,0} \int_{-1}^1 \int_0^1
  &  \f12 \left[ Z_n^{\l+\frac{d-1}{2}}  (z(x,y, u,v, 1)) + Z_n^{\l+\frac{d-1}{2}}  (z(x,y, u,v, -1))\right] \notag \\
  &   \times  u^{\l-1} (1-u)^{\f{d-2}{2}} du (1-v^2)^{\l-\f12} dv. 
\end{align}
\end{thm}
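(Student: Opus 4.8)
The plan is to prove \eqref{eq:reprodBall} by working from its right-hand side toward a closed form of the kernel: I would expand the Gegenbauer polynomial $Z_n^{\l+\mu+\frac{d-1}{2}}(\zeta)$ under the integral sign by means of the addition formula \eqref{eq:addition}, then carry out the $t$-integration, which collapses everything onto the radial Jacobi polynomials of the ball basis, and finally recognize the remaining integration in $u$ and $v$ as an instance of \eqref{eq:Gegen-2}. The companion formula \eqref{eq:repordBall2} will drop out by letting $\mu\to0^{+}$.

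First I would record the kernel itself in a usable form. Writing $x=\|x\|\xi$ and $y=\|y\|\eta$ with $\xi,\eta\in\SS^{d-1}$, the addition theorem for spherical harmonics gives $\sum_{\nu=1}^{\s_m^d}Y_\nu^m(\xi)Y_\nu^m(\eta)=Z_m^{\frac{d-2}{2}}(\la\xi,\eta\ra)$, so \eqref{eq:Pn-ball}, the explicit basis $P_{j,\nu}^n$, and the norms $H_{j,n}$ of the preceding Proposition (the basis of $\CV_n^d(W_{\l,\mu})$) yield
\begin{equation*}
P_n(W_{\l,\mu};x,y)=\sum_{m=0}^{\lfloor n/2\rfloor}\frac{(\|x\|\,\|y\|)^{n-2m}}{H_{m,n}}\,
 P_m^{(\mu-\frac12,\,n-2m+\l+\frac{d-2}{2})}(2\|x\|^2-1)\,
 P_m^{(\mu-\frac12,\,n-2m+\l+\frac{d-2}{2})}(2\|y\|^2-1)\,
 Z_{n-2m}^{\frac{d-2}{2}}(\la\xi,\eta\ra).
\end{equation*}
The theorem then amounts to matching this finite sum with the triple integral in \eqref{eq:reprodBall}.

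Next I would transform the right-hand side of \eqref{eq:reprodBall}. Write the argument as $\zeta=\|x\|\,\|y\|\,\tau+\sqrt{1-\|x\|^2}\,\sqrt{1-\|y\|^2}\,t$, where $\tau:=uv+(1-u)\la\xi,\eta\ra$. Applying \eqref{eq:addition} with $\cos\theta=\|x\|$, $\cos\phi=\|y\|$, with its index $\l$ equal to $\mu$ and its index $\mu$ equal to $\l+\frac{d-1}{2}$ (so the sum of the two indices is $\l+\mu+\frac{d-1}{2}$), and with the variables that \eqref{eq:addition} calls $t$ and $s$ taken to be $\tau$ and the present variable $t$, respectively, expands $Z_n^{\l+\mu+\frac{d-1}{2}}(\zeta)$ as a bilinear sum of generalized Gegenbauer polynomials, a factor $C_k^{\l+\frac{d-2}{2}}(\tau)$, and a factor $C_j^{\mu-\frac12}(t)$. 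Integrating in $t$ against $(1-t^2)^{\mu-1}\,dt$ kills every term with $j\ge 1$ by orthogonality of $C_j^{\mu-\frac12}$ against constants, leaving the $j=0$ terms only; the surviving generalized Gegenbauer polynomials are $D_{2m}^{(\mu,\,\l+\frac{d-1}{2}+n-2m)}(\|x\|)$, which by the quadratic transformation $D_{2m}^{(a,b)}(\rho)\propto P_m^{(a-\frac12,\,b-\frac12)}(2\rho^2-1)$ are proportional to exactly the radial Jacobi polynomials of the ball basis, with the correct factor $(\|x\|\,\|y\|)^{n-2m}$ in front. What remains inside the integral is $C_{n-2m}^{\l+\frac{d-2}{2}}(uv+(1-u)\la\xi,\eta\ra)$, and its integration in $u$ and $v$ against $u^{\l-1}(1-u)^{\frac{d-2}{2}}\,du$ and $(1-v^2)^{\l-\frac12}\,dv$ is precisely \eqref{eq:Gegen-2} read with its indices equal to $\frac{d-2}{2}$ and $\l$ and with $s=1-u$; it collapses this to the zonal kernel $Z_{n-2m}^{\frac{d-2}{2}}(\la\xi,\eta\ra)$. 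Thus the triple integral reproduces the kernel term by term, and the constant $a_{\l,\mu}$ is then pinned down by the case $n=0$.

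The main obstacle is the bookkeeping of the constants that depend on the summation index $m$: the coefficient $b_{k,0,n}^{\mu,\,\l+\frac{d-1}{2}}$ coming from \eqref{eq:addition} with $k=n-2m$, the normalizing constant of the quadratic transformation $D_{2m}\leftrightarrow P_m$, the factor $\frac{\l+\frac{d-2}{2}}{n-2m+\l+\frac{d-2}{2}}$ that converts $C_{n-2m}^{\l+\frac{d-2}{2}}$ to $Z_{n-2m}^{\l+\frac{d-2}{2}}$, and the constant $c_\l\,\s_{\frac{d-2}{2}+1,\l}$ from \eqref{eq:Gegen-2} must be shown to combine with the norm $H_{m,n}$ into a single $m$-independent constant, which is then the $a_{\l,\mu}$ of the statement. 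Finally, \eqref{eq:repordBall2} follows by continuity: the hypothesis $\l>0$ keeps the $u$- and $v$-integrals meaningful, while as $\mu\to0^{+}$ the probability measure proportional to $(1-t^2)^{\mu-1}\,dt$ on $(-1,1)$ concentrates at the endpoints and tends weakly to $\frac12(\d_{1}+\d_{-1})$; since every other ingredient of \eqref{eq:reprodBall} depends continuously on $\mu$ down to $\mu=0$, the $t$-integral degenerates into the average of the integrand at $t=\pm1$, which is \eqref{eq:repordBall2}.
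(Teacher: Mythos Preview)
Your proposal is correct and follows essentially the same route as the paper's proof: both combine the addition formula \eqref{eq:addition} (integrated in $t$ against $(1-t^2)^{\mu-1}$ so that only the $j=0$ terms survive), the quadratic relation between $D_{2m}^{(a,b)}$ and the Jacobi polynomials of the ball basis, the addition formula for spherical harmonics, and the identity \eqref{eq:Gegen-2}, and then pass to the limit $\mu\to0$ for \eqref{eq:repordBall2}. The only cosmetic difference is the direction of the argument---the paper starts from the kernel \eqref{eq:Pn-ball} and builds the triple integral, whereas you start from the integral and collapse it back to the kernel---and the paper, like you, flags the matching of the $m$-dependent constants as a ``tedious verification.''
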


\begin{proof}
Integrating \eqref{eq:addition} with respect to $w_{\mu -1}(t) dt$, then setting 
$\cos\t = \sqrt{1-\|x\|^2}$ and $\cos \phi = \sqrt{1-\|y\|^2}$, and replacing $\l$ by $\l + \f{d-1}{2}$, we deduce that 
\begin{align} \label{eq:addition-2}
 c_{\mu-\f12} \int_{-1}^1 & C_n^{\lambda+\mu+\f{d-1}{2}} ( \|x\| \|y\|s + \sqrt{1-\|x\|^2} \sqrt{1-\|y\|^2}  \,t) (1-t^2)^{\mu-1} dt \\
  = &  \sum_{j=0}^{\lfloor \f{n}2 \rfloor}  b_{0,n-2j,n}^{\l+\f{d-1}{2},\mu} \|x\|^{n-2j} \|y\|^{n-2j}
  D_{2j}^{(n-2j+\lambda +\f{d-1}{2}, \mu)}(\sqrt{1-\|x\|^2}) \notag \\
& \times   D_{2j}^{(n-2j+\lambda +\f{d-1}{2}, \mu)}(\sqrt{1-\|y\|^2})
  C_{n-2j}^{\l+ \f{d-2}{2}}(s). \notag
\end{align}
Let $x' = x/ \|x\|$ and $y' = y/ \|y\|$.  By the addition formula of spherical harmonics, 
$$
   \sum_{\nu =1}^{\s_m^d} Y_\nu(x) Y_\nu(y) = \|x\|^m \|y\|^m Z_m^{\f{d-1}{2}} (\la x',y' \ra). 
$$
Furthermore, by \cite[(2.1a)]{X97a}, we can deduce that 
\begin{align*}
 &  P_j^{(\mu-\f12, n-2j+\l+\f{d-2}{2})}(2\|x\|^2-1) =(-1)^j \sqrt{B_{j,n}} D_{2j}^{(n-2j+\l+\f{d-1}{2},\mu)}(\sqrt{1-\|x\|^2}),
\end{align*}
where 
$$
  B_{j,n} = \frac{\Gamma(n-2j+\l+\mu+\f{d+1}2) \Gamma(j+\mu+\f12)\Gamma(n-j+\l+\f{d}{2})}
  {\Gamma(\mu+\f12) \Gamma(n-2j+\l+\f{d}2)\Gamma(n-j+\l+\mu+\f{d-1}{2}) j! (n+\l+\mu+\f{d-1}{2})}.
$$
Substituting these two identities into the expression \eqref{eq:Pn-ball}, we obtain
\begin{align*}
 P_n(W_{\l,\mu}; x,y) = & \sum_{0 \le j \le n/2} \frac{B_{j,n}}{H_j^n} 
    D_{2j}^{(n-2j+\l+\f{d-1}{2},\mu)}(\sqrt{1-\|x\|^2}) \\
      & \times   D_{2j}^{(n-2j+\l+\f{d-1}{2},\mu)}(\sqrt{1-\|y\|^2}) 
    \|x\|^{n-2j} \|y\|^{n-2j} Z_{n-2j}^{\f{d-2}{2}}(\la x',y'\ra), 
\end{align*}
in which the last term can be replaced, according to \eqref{eq:Gegen-2}, by 
$$
   Z_{n-2j}^{\f{d-2}{2}}(\la x',y'\ra) = c \int_{-1}^1 \int_0^1 Z_{n-2j}^{\l+\f{d-2}{2}}(u v + (1-u) \la x',y'\ra)
     u^{\l-1}(1-u)^{\f{d-2}{2}} du\, w_\l(v) dv. 
$$
Now, a tedious verification shows that the constant 
$$
 \frac{B_{j,n}}{H_j^n} \frac{n-2j + \l+\f{d-2}{2}}{\l+\f{d-2}{2}} = \frac{n+\l+\mu+\f{d-1}{2}} {\l+\mu+\f{d-1}{2}}
 b_{0,n-2j,n}^{\l+\f{d-1}{2},\mu}.
$$
Consequently, \eqref{eq:reprodBall} follows from \eqref{eq:addition-2}. Finally, \eqref{eq:repordBall2} follows
from \eqref{eq:reprodBall} by taking the limit $\mu \to 0$. 
\end{proof}
 
\begin{cor}
For $\l > 0$ and $\mu > 0$, 
\begin{align}\label{eq:reprodBallat0}
P_n(W_{\l,\mu}; x, 0)   =  D_n^{(\l+\frac{d-1}{2},\mu)} (1)D_n^{(\l+\frac{d-1}{2},\mu)} (\sqrt{1-\|x\|^2}),
\end{align}
where $D_n^{(\l,\mu)}$ is the generalized Gegenbauer polynomial defined in \eqref{eq:genGegen}.
\end{cor}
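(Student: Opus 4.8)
The plan is to evaluate \eqref{eq:reprodBall} at $y = 0$ and show that all the integrations collapse to the single term on the right-hand side of \eqref{eq:reprodBallat0}. When $y = 0$ we have $\|y\| = 0$, so that
$$
 \zeta(x,0,u,v,t) = \sqrt{1-\|x\|^2}\, t,
$$
independently of $u$ and $v$. Hence the $u$- and $v$-integrations in \eqref{eq:reprodBall} decouple and contribute only their total mass (which is absorbed into $a_{\l,\mu}$), leaving
$$
 P_n(W_{\l,\mu}; x, 0) = a'_{\l,\mu}\int_{-1}^1 Z_n^{\l+\mu+\f{d-1}{2}}\bigl(\sqrt{1-\|x\|^2}\, t\bigr)(1-t^2)^{\mu-1}\,dt
$$
for a new constant $a'_{\l,\mu}$ determined by the normalization at $n = 0$.

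Next I would recognize the remaining integral as a known quadratic-transformation identity for the generalized Gegenbauer polynomial. Recall that $D_n^{(\lambda,\mu)}$ is orthogonal with respect to $w_{\lambda,\mu}(x) = |x|^{2\mu}(1-x^2)^{\lambda-\f12}$, and that the defining relation between $Z_n^\nu$ (equivalently $C_n^\nu$) and the generalized Gegenbauer polynomials — which is exactly the content of \eqref{eq:Gegen-2} in the special case where the inner Gegenbauer argument is $\sqrt{1-\|x\|^2}\,t$, or alternatively the product/connection formula \cite[(2.1a)]{X97a} already invoked in the proof of \thmref{thm:repodBall} — expresses $\int_{-1}^1 Z_n^{\lambda+\mu}(\cos\theta\, t)(1-t^2)^{\mu-1}\,dt$ as a constant multiple of $D_n^{(\lambda+\frac{d-1}{2}-\frac{d-1}{2},\dots)}$... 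More directly, one can simply set $x = 0$ and $y$ arbitrary in \eqref{eq:addition-2}: taking $\|x\| = 0$ there kills every term with $j > 0$ on the right and evaluates $D_{2j}^{(\cdots,\mu)}(1)$, so \eqref{eq:addition-2} at $\|x\|=0$ reads
$$
 c_{\mu-\f12}\int_{-1}^1 C_n^{\l+\mu+\f{d-1}{2}}\bigl(\sqrt{1-\|y\|^2}\,t\bigr)(1-t^2)^{\mu-1}\,dt
 = b_{0,n,n}^{\l+\f{d-1}{2},\mu}\, D_n^{(\l+\f{d-1}{2},\mu)}(1)\, D_n^{(\l+\f{d-1}{2},\mu)}(\sqrt{1-\|y\|^2}),
$$
using $C_0^{\nu}(s) = 1$. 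This already has the desired shape.

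From here the corollary is a matter of matching constants. I would substitute $y \to x$ in the displayed evaluation of \eqref{eq:addition-2}, combine it with the reduced form of $P_n(W_{\l,\mu};x,0)$ above, and use that both sides must agree at $n = 0$ (where $P_0 \equiv 1$ and $D_0^{(\cdots)} \equiv 1$) to pin down $a'_{\l,\mu}$ and conclude $P_n(W_{\l,\mu};x,0) = D_n^{(\l+\frac{d-1}{2},\mu)}(1)D_n^{(\l+\frac{d-1}{2},\mu)}(\sqrt{1-\|x\|^2})$. Alternatively, and perhaps more transparently, one simply sets $y = 0$ directly in the penultimate display of the proof of \thmref{thm:repodBall} (the formula for $P_n(W_{\l,\mu};x,y)$ in terms of the $D_{2j}$'s): at $\|y\| = 0$ the factor $\|y\|^{n-2j}$ vanishes unless $n = 2j$, so only the single term $j = n/2$ survives (and only when $n$ is even — but the odd case gives $0 = 0$ since $D_n^{(\cdots)}(1)$ pairs against $\|y\|^{0}$... one must be slightly careful here and instead keep $\|y\|^{n-2j}Z_{n-2j}^{(d-2)/2}(\la x',y'\ra)$ together, whose value at $y=0$ is $D_n^{(\cdots)}(1)$-type data via $Z_0 \equiv 1$), collapsing the sum to one term and yielding \eqref{eq:reprodBallat0} after evaluating $D_{2j}^{(\cdots,\mu)}$ and $Z_0$ at the appropriate points.

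The main obstacle I anticipate is the bookkeeping of normalization constants: tracking how $a_{\l,\mu}$, $b_{0,n,n}^{\l+\frac{d-1}{2},\mu}$, $B_{j,n}$, $H_j^n$, and the $c$-constants combine, and verifying that the $u,v$-integral masses cancel correctly so that the $n=0$ normalization forces the clean product form. This is the same "tedious verification" flagged in the proof of \thmref{thm:repodBall}, now specialized to $y=0$; it should be routine but must be done carefully to confirm no residual $\|x\|$-independent factor survives.
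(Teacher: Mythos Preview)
Your approach is essentially the paper's: set $y=0$ in \eqref{eq:reprodBall}, note that $\zeta(x,0,u,v,t)=\sqrt{1-\|x\|^2}\,t$ so the $u,v$ integrals collapse to constants absorbed by the normalization, and then identify the remaining $t$-integral with $D_n^{(\l+\frac{d-1}{2},\mu)}(1)D_n^{(\l+\frac{d-1}{2},\mu)}(\sqrt{1-\|x\|^2})$; the paper does the last step by citing \cite[(2.11)]{X97a}, while you propose to specialize \eqref{eq:addition-2}, which amounts to the same identity.

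One indexing slip to fix: when you set $\|x\|=0$ in \eqref{eq:addition-2}, the factor $\|x\|^{n-2j}$ kills every term with $n-2j>0$, so the surviving term is $j=\lfloor n/2\rfloor$ (not ``$j>0$ vanishes''); correspondingly the surviving coefficient is $b_{0,0,n}^{\l+\frac{d-1}{2},\mu}$, not $b_{0,n,n}^{\l+\frac{d-1}{2},\mu}$, and $D_{2j}=D_n$ with superscript $(\l+\frac{d-1}{2},\mu)$, which is indeed what you wrote in the displayed formula. Your later ``alternative'' paragraph gets this right. For odd $n$ both sides vanish (the integrand $Z_n^{\l+\mu+\frac{d-1}{2}}(\sqrt{1-\|x\|^2}\,t)$ is odd in $t$), so there is no extra case to worry about.
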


\begin{proof}
Setting $y=0$ in \eqref{eq:reprodBall} shows that 
$$
P_n(W_{\l,\mu}; x, 0) = c_{\mu-\f12} \int_{-1}^1 Z_n^{\l+\mu+\frac{d-1}{2}}  (\sqrt{1-\|x\|^2} t) (1-t^2)^{\mu-1} dt, 
$$
from which the stated result follows from \cite[(2.11)]{X97a}.
\end{proof}

Taking the limit $\l =0$, the triple integrals of  \eqref{eq:reprodBall} becomes one layer, the resulted identity was
first proved in \cite{X99}, which plays an essential role in the study of Fourier orthogonal expansions with respect 
to the classical weight function $W_{0,\mu}$ on $\BB^d$. The closed form formula
in Theorem \ref{thm:repodBall} should play a similar role. We give one application.

For $f \in L^1(w_{\l+\mu+\f{d-1}{2}}; [-1,1])$ and $x, y \in \BB^d$,  define 
$$
  G_x f(y) :=  a_{\l,\mu} \int_{-1}^1 \int_0^1 \int_{-1}^1 f (\zeta(x,y, u,v,t)) (1-t^2)^{\mu-1} dt \\
   u^{\l-1} (1-u)^{\f{d-2}{2}} du w_\l(v) dv.
$$
As a consequence of Theorem \ref{thm:repodBall}, we can write
\begin{equation} \label{eq:cesaro}
   K_n^\d (W_{\l,\mu}; x,y) = G_x \left[k_n^\d(w_{\l+\mu+\f{d-1}{2}}; \cdot, 1)\right ](y), 
\end{equation}
where $k_n^\d(w_\l; s,t)$ denotes the Ces\`aro $(C,\delta)$ means of the Gegenbauer series. 

\begin{thm}
For $\l \ge 0$ and $\mu \ge 0$, the Ces\`aro $(C,\delta)$ means for $W_{\l,\mu}$ satisfy 
\begin{enumerate} [\quad 1.]
\item if $\d \ge 2 \l + 2 \mu + d$, then $S_n^\d(W_{\l,\mu}; f) \ge 0$ if $f(x) \ge 0$;
\item $S_n^\d(W_{\l,\mu}; f)$ converge to $f$ in $L^1(W_{\l,\mu}; \BB^d)$ norm or $C(\BB^d)$ norm 
if and only if $\d > \l+\mu+ \f{d-1}{2}$. 
\end{enumerate}
\end{thm}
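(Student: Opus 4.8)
The plan is to reduce everything to the one-dimensional Gegenbauer (and generalized Gegenbauer) case via the closed formula \eqref{eq:cesaro}, where $K_n^\d(W_{\l,\mu};x,y)=G_x[k_n^\d(w_{\l+\mu+\frac{d-1}{2}};\cdot,1)](y)$ and $G_x$ is a positive integral operator (all the weights $(1-t^2)^{\mu-1}$, $u^{\l-1}(1-u)^{\frac{d-2}{2}}$, $w_\l(v)$ are nonnegative on the relevant ranges). The key point is that $G_x$ maps nonnegative functions to nonnegative functions and reproduces constants correctly by the choice of $a_{\l,\mu}$, so positivity and $L^1$/uniform boundedness of the Ces\`aro kernel on the ball are controlled by the corresponding properties of the one-dimensional kernel $k_n^\d(w_{\l+\mu+\frac{d-1}{2}};\cdot,1)$ at the endpoint $t=1$.

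For part 1, I would argue as follows. Write $L := \l+\mu+\frac{d-1}{2}$. It is classical (Gasper's theorem on the nonnegativity of Ces\`aro means for ultraspherical/Gegenbauer series; equivalently the nonnegativity of $k_n^\d(w_L;s,1)$) that $k_n^\d(w_L;\cdot,1)\ge 0$ on $[-1,1]$ as soon as $\d\ge 2L$, i.e. $\d\ge 2\l+2\mu+d-1$. Since $G_x$ is positivity-preserving, \eqref{eq:cesaro} then gives $K_n^\d(W_{\l,\mu};x,y)\ge 0$ for all $x,y\in\BB^d$ whenever $\d\ge 2\l+2\mu+d-1$, hence in particular for $\d\ge 2\l+2\mu+d$; and $S_n^\d(W_{\l,\mu};f)(x)=\int_{\BB^d}f(y)K_n^\d(W_{\l,\mu};x,y)W_{\l,\mu}(y)\,dy\ge0$ whenever $f\ge0$. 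The edge cases $\mu=0$ (using \eqref{eq:repordBall2}) and $\l=0$ (limiting form) are handled the same way since the limiting kernels are still nonnegative combinations of nonnegative one-dimensional kernels.

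For part 2, the strategy is to reduce the $L^1(W_{\l,\mu};\BB^d)$ (and $C(\BB^d)$) operator norm of $S_n^\d$ to the $L^1(w_L;[-1,1])$ norm of the one-dimensional Ces\`aro operator $s\mapsto \int_{-1}^1 f(s)k_n^\d(w_L;s,1)w_L(s)\,ds$ evaluated at the endpoint. The upper bound is immediate from \eqref{eq:cesaro} and the fact that $G_x$ has operator norm $1$ on the relevant spaces (it integrates against a probability measure after the normalization $a_{\l,\mu}$): so $\sup_x\int_{\BB^d}|K_n^\d(W_{\l,\mu};x,y)|W_{\l,\mu}(y)\,dy \le \sup_{|s|\le1}\int_{-1}^1|k_n^\d(w_L;s,1)|w_L(s)\,ds$, and the classical result (the $(C,\d)$ means of the Gegenbauer/generalized-Gegenbauer expansion of index $L$ are uniformly bounded in $L^1$ precisely when $\d>L=\l+\mu+\frac{d-1}{2}$, with a $\log n$ blow-up at $\d=L$) yields convergence for $\d>\l+\mu+\frac{d-1}{2}$ by the usual density argument (convergence on polynomials plus uniform boundedness). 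For the converse (necessity of $\d>\l+\mu+\frac{d-1}{2}$), I would test against a specific $f$, e.g. evaluating $S_n^\d(W_{\l,\mu};f)$ at $x=\one\cdot$ boundary point or at $x=0$, where by the Corollary \eqref{eq:reprodBallat0} and \eqref{eq:cesaro} the kernel collapses to a one-dimensional generalized-Gegenbauer Ces\`aro kernel, so a known lower bound for $\|k_n^\d(w_L;\cdot,1)\|_{L^1(w_L)}$ when $\d\le L$ transfers to a lower bound for $\|S_n^\d(W_{\l,\mu})\|$, precluding convergence. The main obstacle is the sharp necessity direction: one must produce an explicit function (or a uniform-boundedness-principle argument using a point where the ball kernel reduces to a genuine one-dimensional kernel, such as $y=0$ via the Corollary, or a boundary point) for which the $(C,\d)$ means diverge when $\d\le\l+\mu+\frac{d-1}{2}$, and verify the lower bound is not lost in passing from the one-dimensional reduction back to $\BB^d$ — this requires checking that the reduction at that special point is an \emph{equality}, not just a domination, which is exactly what \eqref{eq:reprodBallat0} and the endpoint structure of \eqref{eq:cesaro} provide.
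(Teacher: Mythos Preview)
Your strategy coincides with the paper's (Gasper for part~1; upper bound via \eqref{eq:cesaro} and lower bound at $x=0$ via \eqref{eq:reprodBallat0} for part~2), but the justification of the upper bound has a real gap. You write that $G_x$ ``has operator norm~$1$ because it integrates against a probability measure''; that observation concerns the $(u,v,t)$ integration and yields only $\|G_x g\|_{L^\infty(\BB^d)}\le\|g\|_{L^\infty[-1,1]}$. What the argument actually requires is
\[
b_{\l,\mu}\int_{\BB^d}G_x[\,|g|\,](y)\,W_{\l,\mu}(y)\,dy \le c_L\int_{-1}^1|g(t)|\,w_L(t)\,dt, \qquad L=\l+\mu+\tfrac{d-1}{2},
\]
and this is a statement about the $y$-integral over $\BB^d$, for which the probability-measure remark says nothing: the pushforward of $W_{\l,\mu}(y)\,dy$ under $y\mapsto\zeta(x,y,u,v,t)$ has no evident relation to $w_L$. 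The paper closes this gap by proving the above as an \emph{identity}: expand $g=\sum_k\widehat g_k Z_k^L$, use Theorem~\ref{thm:repodBall} to recognize $G_x Z_k^L=P_k(W_{\l,\mu};x,\cdot)$, and integrate against $W_{\l,\mu}$ so that orthogonality kills every term except $k=0$. With this identity in hand, your bound (and the rest of your argument) is correct.

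A minor correction on part~1: Gasper's threshold for nonnegativity of $k_n^\d(w_L;\cdot,1)$ is $\d\ge 2L+1$, not $\d\ge 2L$; with $L=\l+\mu+\frac{d-1}{2}$ this is exactly $\d\ge 2\l+2\mu+d$, so the theorem's bound is already the one delivered by this method and your ``hence in particular'' is superfluous.
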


\begin{proof}
The first assertion follows immediately from the non-negativity of the Gegenbauer series \cite{Gas}. 
To prove the second assertion, we first show that 
\begin{equation} \label{eq:intGx}
 b_{\l,\mu} \int_{\BB^d}  G_x g(y) W_{\l, \mu}(y) dy = c_{\l+\mu+\f{d-1}{2}} \int_{-1}^1 g(t) w_{\l+\mu+\f{d-1}{2}}(t) dt
\end{equation} 
for $g \in L^1(w_{\l+\mu+\f{d-1}{2}}; [-1,1])$. It suffices to prove it for $g$ being a polynomial, which
we can write as $g(t) = \sum_{k=0}^N \wh g_k Z_k^{\l+\mu+\f{d-1}{2}}(t)$, where $\wh g_0$ is exactly the
right hand side of \eqref{eq:intGx}. By Theorem \ref{thm:repodBall}, $G_x g(y) = 
 \sum_{k=0}^N \wh g_k P_k(W_{\l,\mu}; x,y)$, so that \eqref{eq:intGx} follows from the orthogonality 
 of $P_k(W_{\l,\mu}; x,\cdot)$. 
 A standard argument shows that $S_n^\d(W_{\l,\mu}; f)$ converges to $f$ in either $L^1(W_{\l,\mu}; \BB^d)$
norm or $C(\BB^d)$ norm if, and only if, 
$$
  \Lambda_n(x):= b_{\l,\mu} \int_{\BB^d} \left| K_n^\d(W_{\l,\mu};x,y)\right| W_{\l, \mu}(y) dy 
$$
is bounded, independent of $n$, for all $x \in \BB^d$. 
 From \eqref{eq:cesaro} and \eqref{eq:intGx}, we conclude that 
$$
\Lambda_n(x) \le c_{\l+\mu+\f{d-1}{2}} \int_{-1}^1\left|k_n^\d(w_{\l+\mu+\f{d-1}{2}}; t, 1)\right| w_{\l+\mu+\f{d-1}{2}}(t) dt,
$$
which is finite if $\d > \l+\mu+\f{d-1}{2}$ (Theorem 9.1.32 \cite[p. 246]{Szego}). Furthermore, by 
\eqref{eq:reprodBallat0}, we obtain 
$$
   K_n^\d(W_{\l,\mu}; 0, y) = k_n^\d\left(w_{\l+\f{d-1}{2},\mu}; \sqrt{1-\|x\|^2}, 1\right),
$$
where $w_{\l+\f{d-1}{2}, \mu}$ is the generalized Gegenbauer weight. Using spherical-polar coordinate and 
making a change of variable, it is easy to see that 
$$
 b_{\l,\mu} \int_{\BB^d} \left | K_n^\d(W_{\l,\mu}; 0, y) \right |W_{\l,\mu}(y) dy =
    c_{\l+\f{d-1}{2},\mu} \int_{-1}^1 \left| k_n^\d(w_{\l+\f{d-1}{2},\mu}; 1, t) \right | w_{\l+\f{d-1}{2},\mu}(t) dt,
$$ 
which is bounded if, and only if, $\d > \l + \mu + \f{d-1}{2}$ by \cite[Theorem 2.4]{DaiX}. 
\end{proof}

\medskip\noindent
{\bf Acknowledgement.} The author thanks Walter Van Assche for the elementary proof of 
\eqref{eq:main} and for giving him the permission to use it in place of the original proof.

\section*{Appendix. Alternative proof of Theorem \ref{thm:main}}

This appendix contains the original proof of Theorem \ref{thm:main}, which shows how the identity was discovered.
The proof is added here in respond to a request by a reader and it will appear only in the ArXiv version of this paper. 

\medskip\noindent
{\it Proof of Theorem \ref{thm:main}}.
Setting $x = \cos \t = \cos \phi$ in \eqref{eq:addition} and integrating with respect to $w_{\l,\mu}$, we obtain
by the orthonormality of $D_n^{(\l+j,\mu+k)}$ that 
\begin{align*}
c_{\l,\mu} \int_{-1}^1 C_n^{\l+\mu} & (x^2 t + (1-x^2) s) w_{\l,\mu}(x) dx \\
 & =   \sum_{m=0}^{\lfloor \f{n}2 \rfloor} \sum_{k + j = n-2m}b_{k,j}^n \frac{c_{\l,\mu}}{c_{\l+j,\mu+k}} 
     C_k^{\mu- \f12}(t) C_j^{\lambda- \f12}(s).
\end{align*}
Changing variable $y = x^2$ in the integral and simplifying the constants in the right hand side, the above 
identity can be written as 
$$
 c_{\l,\mu} \int_{0}^1 Z_n^{\l+\mu} (y t + (1-y) s) y^{\l-\f12}(1-y)^{\mu -\f12} dy 
    =   \sum_{m=0}^{\lfloor \f{n}2 \rfloor} \sum_{k + j = n-2m} Z_k^{\mu-\f12}(t)Z_j^{\l-\f12}(s).
$$ 
In particular, using the identity 
$$
    \sum_{m=0}^{\lfloor \f{n}2 \rfloor} \sum_{k + j = n-2m} a_{k,j} + 
     \sum_{m=0}^{\lfloor \f{n-1}2 \rfloor} \sum_{k + j = n-1-2m}a_{k,j}
 =  \sum_{m=0}^n \sum_{k + j = m} a_{k,j},
$$
and replacing $\l$ by $\l+1/2$ and $\mu$ by $\mu +1/2$, we deduce that 
\begin{align*}
   \sum_{m=0}^{n} & \sum_{k + j = m} Z_k^{\mu}(t)Z_j^{\l}(s) \\
      & =  c_{\l+\f12,\mu+\f12} \int_{0}^1 \left[ Z_n^{\l+\mu+1} (y t + (1-y) s) +
  Z_{n-1}^{\l+\mu+1} (y t + (1-y) s)\right]  y^{\l}(1-y)^{\mu} dy. 
\end{align*}
Next we multiply the above identity by $r^n$, $0 \le r < 1$, and summing up over $n$. In 
the left hand side, we obtain
\begin{align*}
   \sum_{n=0}^\infty & \sum_{m=0}^{n}   \sum_{k + j = m} Z_k^{\mu}(t)Z_j^{\l}(s)  r^n = \frac{1}{1-r} 
      \sum_{n=0}^\infty  \sum_{k + j = n} Z_k^{\mu}(t)Z_j^{\l}(s)  r^n  \\
       & =  \frac{1}{1-r}  \sum_{k=0}^\infty Z_k^{\mu}(t) r^k \sum_{j=0}^\infty  Z_j^{\l}(s)  r^j
        = \frac{(1+r) (1-r^2)}{(1-2 r s + r^2)^{\l+1}(1-2 r t + r^2)^{\mu+1} }
\end{align*} 
by \eqref{eq:generatingC}. The right hand side can be summed up by using 
$$
   \sum_{n=0}^\infty \left[ Z_n^{\l+\mu+1}(u) + Z_{n-1}^{\l+\mu+1}(u) \right] r^n 
      = (1+r)   \sum_{n=0}^\infty  Z_n^{\l+\mu+1}(u) r^n 
    =  \frac{(1+r) (1-r^2)}{(1-2 r u + r^2)^{\l+\mu+2}}.
$$
Putting these together, we have proved that 
\begin{align*}
 & \frac{1}{(1-2 r s + r^2)^{\l+1}(1-2 r t + r^2)^{\mu+1} } \\
    & \qquad\qquad =  c_{\l+\f12,\mu+\f12} \int_{0}^1  \frac{1}{(1-2 r (y t + (1-y) s) + r^2)^{\l+\mu+2}}
  y^{\l}(1-y)^{\mu} dy.    
\end{align*}
Rescaling and replacing $2 r /(1+r^2)$ by $r$, it follows that 
$$
   \frac{1}{(1- r s)^{\l+1}(1- r t)^{\mu+1} } =  c_{\l+\f12,\mu+\f12} \int_{0}^1  \frac{1}{(1- r (y t + (1-y)s ))^{\l+\mu+2}}
  y^{\l}(1-y)^{\mu} dy,
$$ 
which proves \eqref{eq:main} for $d=2$ and $\l,\mu > 1/2$ when we replace $\l$ by $\l -1$ and $\mu$ by $\mu -1$. 
Analytic continuation shows that the identity holds for $\l, \mu > 0$. 

The general case of \eqref{eq:main} follows from induction. Assume that \eqref{eq:main} has been established 
for $d$ variables. Let the constant in front of the integral in \eqref{eq:main} be denoted by $\s_\lb$, its value can be 
determined by setting $r=0$ and is not important for the induction. For $x \in \RR^{d+1}$, write $x = (x_1,x')$ and
$\lb = (\l_1,\lb')$. Then, using \eqref{eq:main} for $d=2$,
\begin{align*}
 \prod_{i=1}^{d+1} \f 1 {(1 -  r x_i)^{\l_i}} & = \s_{\lb'}  \frac{1}{(1-r x_1)^{\l_1}} \int_{\CT^d} \frac{ \prod_{i=2}^{d+1} u_i^{\l_i-1}}
   {(1- r \la x',u\ra)^{|\lb '| }} du \\
    & = c \int_{\CT^d} \int_0^1 \frac{ \prod_{i=2}^{d+1} u_i^{\l_i-1} (1-t_1)^{|\lb'|} t_1^{\l_1}}
      {(1- ( (1-t_1)\la x',u\ra + t_1 u_1 ) r )^{|\lb | }} dt_1 du,
\end{align*}
where we have written $u =(u_2,\ldots, u_{d+1}) \in \CT^d$. Since $|u| = 1$, we write $u_{d+1} = 1-u_2-\cdots -u_d$ and 
make a change of variables $t_i = (1-t_1) u_i$ for $i =2,\ldots, d$, it follows that 
$$
  (1- t_1)^{|\lb'|} t_1^{\l_1} \prod_{i=2}^{d+1} u_i^{\l_i} =
      \prod_{i=1}^d t_i^{\l_i} (1-t_1-\cdots - t_{d+1})^{\l_{d+1}} = \prod_{i=2}^{d+1} t_i^{\l_i},   
$$
where $t_{d+1} = 1-t_1-\cdots - t_d$ and, moreover,  $(1- t_1) \la x',u\ra + t_1 u_1 =  \la x, t \ra$ with $x = (x_1,x')$,
$t = (t_1,\ldots, t_{d+1})$ and $t_{d+1} = 1-t_1-\cdots - t_d$. Consequently, since $(1-t_1)^{d-1} d u = d t_2\cdots d t_d$, 
we conclude that 
\begin{align*}
 \prod_{i=1}^{d+1} \f 1{(1 -  r x_i)^{\l_i}} = c \int_0^1   \int_{\CT_{1-t_1}^d} 
     \frac{ \prod_{i=1}^{d+1} t_i^{\l_i-1}} 
      {(1- \la x, t \ra r)^{|\l | }} dt_1\cdots d t_{d+1} =  c \int_{\CT^{d+1}}  \frac{ \prod_{i=1}^{d+1} t_i^{\l_i-1}} 
      {(1- \la x, t \ra r)^{|\l | }} dt,
\end{align*}
where $c$ is a constant and its value can be determined by setting $r = 0$. This completes the proof of 
\eqref{eq:main}. \qed
\medskip

\end{document}